\numberwithin{equation}{section}
\newtheorem{theoremcounter}{theoremcounter}[section]
\theoremstyle{plain}
\newtheorem{corollary}[theoremcounter]{Corollary}
\newtheorem{lemma}[theoremcounter]{Lemma}
\newtheorem{proposition}[theoremcounter]{Proposition}
\newtheorem{theorem}[theoremcounter]{Theorem}
\newtheorem{example}[theoremcounter]{Example}
\newtheorem{introtheorem}{Theorem}
\newtheorem{introcorollary}[introtheorem]{Corollary}
\theoremstyle{definition}
\newtheorem{definition}[theoremcounter]{Definition}
\theoremstyle{remark}
\newtheorem{remark}[theoremcounter]{Remark}
\newcommandx{\unsure}[2][1=]{\todo[linecolor=red,backgroundcolor=red!25,bordercolor=red,#1]{#2}}
\newcommandx{\change}[2][1=]{\todo[linecolor=blue,backgroundcolor=blue!25,bordercolor=blue,#1]{#2}}
\newcommandx{\info}[2][1=]{\todo[linecolor=OliveGreen,backgroundcolor=OliveGreen!25,bordercolor=OliveGreen,#1]{#2}}
\newcommandx{\improvement}[2][1=]{\todo[linecolor=Plum,backgroundcolor=Plum!25,bordercolor=Plum,#1]{#2}}
\newcommand{\Cs}{\mathrm{C}^*}
\newcommand{\Csr}{\mathrm{C}^*_{\text{red}}}
\newcommand{\ra}{\rightarrow}
\newcommand{\bC} {\mathbb C}
\newcommand{\authors}{Sanaz Pooya $\bullet$ Baiying Ren $\bullet$ Hang Wang }
\renewcommand{\title}{Euler characteristics, higher Kazhdan projections and delocalised $\ell^2$-Betti numbers}
\begin{document}
	

	\thispagestyle{empty}
	
	\noindent
	\begin{minipage}{\linewidth}
		\begin{center}
			\textbf{\Large \title} \\
			\authors    
		\end{center}
	\end{minipage}
	
	\renewcommand{\thefootnote}{}
	\footnotetext{last modified on \today}
	\footnotetext{
		\textit{MSC classification: 46L80, 19D55, 20F65 }
	}
	\footnotetext{%
		\textit{Keywords: higher Kazhdan projections, Euler characteristics, virtually free groups, delocalised $\ell^2$-Betti numbers}
	}
	
	\vspace{2em}
	\noindent
	\begin{minipage}{\linewidth}
		\textbf{Abstract}. For non-amenable finitely generated virtually free groups, we show that the combinatorial Euler characteristic introduced by Emerson and Meyer is the preimage of the K-theory class of higher Kazhdan projections under the Baum-Connes assembly map. This allows to represent the K-theory class of their higher Kazhdan projection as a finite alternating sum of the K-theory classes of certain averaging projections. The latter is associated to the finite subgroups appearing in the fundamental domain of their Bass-Serre tree. As an immediate application we obtain 
        non-vanishing calculations for delocalised $\ell ^2$-Betti numbers for this class of groups.
		
	\end{minipage}

	
\section{Introduction}
	\label{sec:introduction}

Topological invariants play a fundamental role in understanding the geometry and topology of spaces, groups, and spaces equipped with symmetries. Typical examples include the Euler characteristic for finite CW complexes and its refinement, Betti numbers, which capture the ranks of homology groups. This classical setting can be extended by considering universal covering spaces acted upon by fundamental groups through deck transformations. The classic $L^2$-invariants and a younger analogue concept of delocalised $L^2$-invariants~\cite{lott1999} have emerged as powerful tools in geometry and topology. A notable application of the latter can be found in \cite{XieYu2021}, where it is shown that certain delocalised $L^2$-invariant can obstruct the surjectivity of the Baum–Connes assembly map through their non-algebraicity.
The analogues of Betti numbers in this setting are the $\ell^2$-Betti numbers and delocalised $\ell^2$-Betti numbers. These notions can be promoted to invariants for discrete groups by replacing the space with the universal cover of the classifying space of the group. These invariants have deep connections to representation theory, geometric group theory, and noncommutative geometry.

The key objects of study in this paper are higher Kazhdan projections associated with a discrete group, introduced  in ~\cite{linowakpooya2020}. These projections serve as group invariants, appearing as elements in the $K$-theory of group $C^*$-algebras, associated to a family of unitary representations, and providing a noncommutative counterpart to classical topological invariants. In particular, for a group $G$ with its left regular representation, the higher Kazhdan projections can define non-zero elements in the $K$-theory of the reduced group $C^*$-algebra:
\[
 [p_n] \in \mathrm K_0(\Csr(G)), \qquad n=1,2, \ldots.
\]
These $K$-theory classes are closely linked to $\ell^2$-Betti numbers via the von Neumann trace  and delocalised $\ell^2$-Betti numbers via delocalised traces~\cite{Pooya-Wang}. As a result, they are recognised as higher topological invariants for groups. However, despite their significance, explicit computations of these projections remain highly non-trivial. In ~\cite{Pooya-Wang}, the first and third author computed higher Kazhdan projections for $\mathbb{Z}_m * \mathbb{Z}_n$ directly by solving equations in the group ring.
The result was later naturally generalized to amalgamted product groups $\mathbb{Z}_m *_{\mathbb{Z}_d} \mathbb{Z}_n$ in \cite{Ren}.
However, this direct computation method depends heavily on the associated Laplacians and free resolutions, making generalizations to broader classes of groups quite challenging.

The central goal of this paper is to establish a relationship between higher Kazhdan projections and the $K$-homological Euler characteristic via the Baum-Connes assembly map, particularly for virtually free groups. Our approach provides a topological framework for studying these invariants and highlights their interplay with higher index theory. Guided by this perspective, we propose a new and flexible method for computing higher Kazhdan projections, leveraging the fact that the $K$-group accommodating these projections is itself a group invariant. The computation of this $K$-group is facilitated by the Baum-Connes conjecture, which posits that the Baum-Connes assembly map
\[
\mu_i^G: \mathrm K^G_i(\underline{E}G) \to \mathrm K_i(\Csr(G)) \qquad i=0,1
\]
is an isomorphism (see~\cite{baumconneshigson1994}). 

Our main result relates the K-theory classes of higher Kazhdan projections, provided they lie in the image of the Baum-Connes assembly map, to the equivariant Euler characteristic on the universal space of proper actions. See Theorem~\ref{thm A} below.

The Euler characteristic, a fundamental topological invariant, serves as a measure of the shape and structure of a space. By the Gauss-Bonnet-Chern theorem, it appears as the Fredholm index of the de Rham operator. For closed manifolds, such an operator defines an element in the $K$-homology of the manifold corresponding to the Euler number through the $K$-homological functor applied to the inclusion of a point~\cite{Rosenberg99}. Hence, the K-homology class of the de Rham operator is called the geometric Euler characteristic. L\"uck and Rosenberg~\cite{Lueck-Rosenberg} extended this notion to an arbitrary manifold carrying a proper cocompact action by a discrete group, and Emerson and Meyer~\cite{EM} later provided a combinatorial description for more general proper cocompact complexes. Since the image of the combinatorial Euler characteristic under the assembly map can be computed explicitly, we obtain explicit computations of higher Kazhdan projections and delocalised $\ell^2$-Betti numbers. See Corollary~\ref{Corrollary B} and Corollary~\ref{Corrollary C} below. 
Let $SX$ denote the set of simplices of a complex $X$. Let  $\rho_{\sigma}$ denote the averaging projection associated to the (finite) isotropy group $G_{\sigma}$ of the barycentre $\xi_{\sigma}$ of a simplex $\sigma$ in $SX$.
\begin{introtheorem}(See Theorem \ref{main thm1})\label{thm A} 
    Let $G$ be a discrete group for which there is a $G$-finite model $X$ for $\underline E G$. 
    Assume that there is only one non-vanishing higher Kazhdan projection $p_n$ associated with $G$, for some $n$.  
    Then there is an explicit formula for the $\mathrm{K}$-theory class $[p_n]$ employing the averaging projections associated with the orbits
    \begin{equation*}
    (-1)^n[p_n] = \sum_{\sigma\in G \backslash SX}(-1)^{|\sigma|} [\rho_{\sigma}] \in \mathrm{K_0(C^*_{red}}(G)).
    \end{equation*}
     Moreover, the combinatorial Euler class   
   $
    [\mathrm{Eul}^{\mathrm{cmb}}_X] \in 
    \mathrm{K}^G_0(\underline{E}G)
    $
    satisfies
    \begin{equation*}
        \mu_0^G([\mathrm{Eul}^{\mathrm{cmb}}_X]) = (-1)^n[p_n].
    \end{equation*}
\end{introtheorem}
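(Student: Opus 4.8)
The plan is to evaluate the single class $\mu_0^G([\mathrm{Eul}^{\mathrm{cmb}}_X])$ in two independent ways: a combinatorial evaluation producing the alternating sum of averaging projections, and an analytic (index-theoretic) evaluation producing $(-1)^n[p_n]$. Comparing the two then yields both displayed identities simultaneously. Throughout I would use that $X$ is a $G$-finite model for $\underline{E}G$, so that the simplicial $\ell^2$-cochain complex of $X$ consists of finitely generated Hilbert $\Csr(G)$-modules, and that the hypothesis of a non-vanishing higher Kazhdan projection amounts to $0$ being isolated in the spectrum of the $n$-th combinatorial Laplacian, which is exactly what places $p_n$ inside $\Csr(G)$.

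For the combinatorial evaluation I would start from the Emerson-Meyer description of the Euler class as a finite sum over orbits of simplices,
\[
[\mathrm{Eul}^{\mathrm{cmb}}_X] = \sum_{\sigma \in G\backslash SX} (-1)^{|\sigma|}\,(\iota_\sigma)_*[\mathbf{1}_{G_\sigma}],
\]
where $\iota_\sigma$ is induced by the inclusion of the orbit $G\cdot\xi_\sigma \cong G/G_\sigma$ of the barycentre and $[\mathbf{1}_{G_\sigma}] \in R(G_\sigma) = \mathrm{K}^{G_\sigma}_0(\mathrm{pt})$ is the class of the trivial representation. Applying $\mu_0^G$ and invoking naturality of the assembly map with respect to the inclusions $G_\sigma \hookrightarrow G$, each summand is carried to the image of $[\mathbf{1}_{G_\sigma}]$ under the composite $R(G_\sigma)\to \mathrm{K}_0(\Csr(G))$. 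As $G_\sigma$ is finite, this composite is the canonical map $R(G_\sigma)=\mathrm{K}_0(\mathrm{C}^*(G_\sigma))\to \mathrm{K}_0(\Csr(G))$, under which the trivial representation goes precisely to the class of the averaging projection $\rho_\sigma = |G_\sigma|^{-1}\sum_{g\in G_\sigma} g$. This gives $\mu_0^G([\mathrm{Eul}^{\mathrm{cmb}}_X]) = \sum_{\sigma}(-1)^{|\sigma|}[\rho_\sigma]$.

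For the analytic evaluation I would interpret the combinatorial Euler class as the $K$-homology class of the $\mathbb{Z}/2$-graded de Rham operator $d+d^*$ on the simplicial cochain complex of $X$, whose image under the assembly map is its $\Csr(G)$-valued index. Using the $\ell^2$-Hodge decomposition — legitimate here because the spectral gap forces the differentials to have closed range — this index equals the formal alternating sum $\sum_k (-1)^k[\mathcal{H}^k]$ of the reduced $\ell^2$-cohomology modules $\mathcal{H}^k=\ker\Delta_k$. The orthogonal projection onto $\mathcal{H}^k$ is by definition the higher Kazhdan projection $p_k$, so the index is $\sum_k(-1)^k[p_k]$, collapsing to $(-1)^n[p_n]$ under the hypothesis. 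As a useful cross-check, the same Hodge decomposition applied to the finitely generated cell modules $[C^k]=\sum_{|\sigma|=k}[\rho_\sigma]$ shows directly in $\mathrm{K}_0(\Csr(G))$ that $\sum_\sigma(-1)^{|\sigma|}[\rho_\sigma]=\sum_k(-1)^k[p_k]$, independently confirming the first identity.

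The main obstacle is the analytic evaluation: one must match the Emerson-Meyer combinatorial model precisely with the analytic de Rham operator and verify that the assembly map genuinely computes its $L^2$-index, with the correct grading and signs, so that the output is the alternating sum of Kazhdan projections. I expect the established compatibility of combinatorial and analytic Euler characteristics (in the spirit of L\"uck-Rosenberg and Emerson-Meyer) to supply most of this, but care is needed to guarantee that the classes $[\mathcal{H}^k]$ are well-defined in $\mathrm{K}_0(\Csr(G))$ and not merely at the level of the group von Neumann algebra — which is exactly the point at which the spectral-gap hypothesis on the $n$-th Laplacian becomes indispensable.
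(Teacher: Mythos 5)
Your combinatorial evaluation of $\mu_0^G([\mathrm{Eul}^{\mathrm{cmb}}_X])$ is exactly the paper's Lemma~\ref{lemma: Euler}: the Emerson--Meyer class decomposes over orbits $G/G_\sigma$, and naturality of the assembly map under $G_\sigma\hookrightarrow G$ carries the trivial representation of the finite isotropy group to $[\rho_\sigma]$. Where you diverge is in how $(-1)^n[p_n]$ enters. The paper never passes through an analytic de Rham operator or an $L^2$-index theorem: it proves $\sum_{\sigma}(-1)^{|\sigma|}[\rho_\sigma]=\sum_k(-1)^k[p_k]$ directly in $\mathrm{K}_0(\Csr(G))$ by (i) turning the simplicial chain complex of $X$ into a finite projective resolution whose terms are $\bigoplus \bC G\cdot\rho_{H_{i_k}}$ (Proposition~\ref{proj resolution}, Lemma~\ref{step1}), (ii) splitting the resulting Hilbert-space cochain complex into an ``even $\cong$ odd'' isomorphism via the Hodge-type decompositions of Lemma~\ref{lemma: kernel of Delta} and Lemma~\ref{split}, and (iii) verifying that the intertwiner $S$ and its inverse lie in matrices over $\Csr(G)$, because $\tilde d_j=d_jf(d_j^*d_j)$ with $f(t)=t^{-1/2}$ does once the Laplacians have spectral gaps (Proposition~\ref{lem: step 2}). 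This is precisely what you relegate to a ``cross-check,'' and it is in fact the load-bearing argument. Your primary analytic route --- identifying the Emerson--Meyer cycle, which carries the zero operator, with the class of $d+d^*$, and asserting that assembly computes its higher index --- is exactly the step you flag as the main obstacle; it is not supplied by anything in the paper, which simply sidesteps it. So the proposal is viable, but only because its fallback coincides with the paper's actual proof; the headline strategy would require importing a nontrivial compatibility statement between the combinatorial cycle and the analytic index that you do not establish.

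One point needs repair in either route: the spectral gap of $\Delta_n$ alone does not legitimize the Hodge decomposition inside $\Csr(G)$. To have $\tilde d_j$ and the kernel projections in matrices over $\Csr(G)$ you need $0$ isolated in (or absent from) the spectrum of every $\Delta_j$, $0\le j\le n+1$, not just in degree $n$; the vanishing of $p_j$ for $j\ne n$ must be read as invertibility of $\Delta_j$, not merely triviality of its kernel. The paper makes this an explicit hypothesis in Proposition~\ref{lem: step 2}, whereas your phrase ``the spectral gap forces the differentials to have closed range'' silently promotes the degree-$n$ assumption to all degrees.
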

The proof of our main result relies on the flexibility of higher Kazhdan projections as topological invariants. A key insight is the analogy with Hodge theory, where these projections naturally arise as kernels of Laplacians, as well as their interpretation within the framework of reduced group cohomology. Unlike direct analytic computations of Laplacian kernels, which are often highly intricate, our method circumvents the need for solving explicit equations, offering a significant conceptual advantage. This approach significantly generalises our previous results for $PSL(2, \mathbb{Z})$ in~\cite{Pooya-Wang} and provides a systematic way to to calculate delocalised $\ell^2$-Betti numbers. Furthermore, to establish our main theorem, we introduce higher Kazhdan projections in a broader setting, further expanding their applicability in noncommutative geometry and geometric group theory. (See Definition \ref{def: higher kazhdan gen})
 
For the class of virtually free groups Theorem \ref{thm A} has the following applications.

 \begin{introcorollary}(See Corollary \ref{cor: tree})\label{Corrollary B}
    Let $G$ be a non-amenable finitely generated virtually free group and
    $X$ be its Bass-Serre tree. Then the $\mathrm{K}$-theory class $[p_1]$ can be described in terms of the averaging projections associated to the vertices and edges of the fundamental domain of the tree $X$
    \begin{equation*}
        -[p_1]=\sum_{v\in \, \mathrm{vert}\ G \backslash X}{[\rho_{v}]}\,\,\,\,\,-\sum_{e\in \, \mathrm{edge}\  G \backslash X}{[\rho_{e}]}\in \mathrm{K_0(C^*_{red}}(G)).
    \end{equation*}    
\end{introcorollary}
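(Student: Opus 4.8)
The plan is to deduce Corollary~\ref{Corrollary B} directly from Theorem~\ref{thm A}, so the main work is checking that virtually free groups satisfy the hypotheses of the theorem and then specialising the alternating sum over simplices to an alternating sum over vertices and edges of the Bass-Serre tree. Let me sketch how I would organise this.

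First I would recall the Bass-Serre theory input: a finitely generated virtually free group $G$ acts properly on its Bass-Serre tree $X$ with finite stabilisers, and the quotient $G\backslash X$ is a finite graph (this is where finite generation enters). A tree is contractible and the action is proper with finite isotropy, so $X$ is a one-dimensional $G$-finite model for $\underline{E}G$; this verifies the standing hypothesis of Theorem~\ref{thm A} that there is a $G$-finite model $X$ for $\underline{E}G$. The simplices of a tree are exactly its vertices ($|\sigma|=0$) and edges ($|\sigma|=1$), so the set $SX$ of simplices splits as $\mathrm{vert}\,X \sqcup \mathrm{edge}\,X$, and the orbit set $G\backslash SX$ splits correspondingly as $\mathrm{vert}\,G\backslash X \sqcup \mathrm{edge}\,G\backslash X$.

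Next I would address the non-amenability hypothesis and the assumption in Theorem~\ref{thm A} that there is only one non-vanishing higher Kazhdan projection $p_n$, with the claim being that $n=1$. The point is that $X$ is one-dimensional, so the relevant reduced $\ell^2$-cohomology (equivalently, the kernels of the combinatorial Laplacians giving rise to the $p_n$) can only be non-trivial in degrees $0$ and $1$. For $G$ non-amenable the degree-zero reduced $\ell^2$-cohomology vanishes (there are no nonzero $\ell^2$ harmonic $0$-cochains, equivalently the trivial representation is not weakly contained in the regular representation), so $p_0=0$ and the only possibly non-vanishing projection is $p_1$. This is exactly where non-amenability is used, and it pins down $n=1$, so that $(-1)^n=-1$.

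With $n=1$ established and the hypotheses of Theorem~\ref{thm A} verified, the formula
\begin{equation*}
(-1)^n[p_n] = \sum_{\sigma\in G\backslash SX}(-1)^{|\sigma|}[\rho_\sigma]
\end{equation*}
specialises, upon substituting $n=1$ and splitting the sum according to $|\sigma|\in\{0,1\}$, to
\begin{equation*}
-[p_1] = \sum_{v\in\,\mathrm{vert}\,G\backslash X}[\rho_v] - \sum_{e\in\,\mathrm{edge}\,G\backslash X}[\rho_e]\in \mathrm{K}_0(\Csr(G)),
\end{equation*}
which is the assertion. I expect the main obstacle to be the justification that $p_1$ is the unique non-vanishing higher Kazhdan projection rather than the purely formal bookkeeping of the alternating sum: one must confirm that non-amenability forces $p_0=0$ and that one-dimensionality of $X$ forces $p_n=0$ for $n\ge 2$, so that Theorem~\ref{thm A} applies with $n=1$. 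The remaining steps---identifying $X$ as a $G$-finite model for $\underline{E}G$ and matching the indexing sets---are routine once Bass-Serre theory is invoked.
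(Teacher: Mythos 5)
Your overall strategy coincides with the paper's: Corollary \ref{cor: tree} is deduced from Theorem \ref{main thm1} by taking the Bass--Serre tree as a one-dimensional $G$-finite model for $\underline{E}G$ and splitting the alternating sum over $G\backslash SX$ into vertex and edge orbits. That bookkeeping is correct, as is the observation that one-dimensionality kills $p_n$ for $n\geq 2$.

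The genuine gap is that you never verify that $\Delta_1$ has a spectral gap at $0$, and without this the statement does not even parse: a priori the kernel projection of $\Delta_1$ lives only in the group von Neumann algebra, and it is precisely the spectral gap that places $p_1$ in matrices over $\Csr(G)$ so that $[p_1]\in\mathrm{K}_0(\Csr(G))$ makes sense; the same hypothesis is what Proposition \ref{lem: step 2} needs to put the intertwiner $S$ and the projections into matrix algebras over $\Csr(G)$. The paper supplies this in Lemma \ref{lem:existence} via the Bader--Nowak criterion (Proposition \ref{lem: spectral gap tool}): $\Delta_1$ has a spectral gap if and only if $\mathrm H^1(G,\ell^2(G))$ and $\mathrm H^2(G,\ell^2(G))$ are reduced, the first following from non-amenability and the second from Shapiro's lemma applied to a finite-index free subgroup. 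Relatedly, your use of non-amenability is misplaced: $\ker\Delta_0=\ell^2(G)^G=0$ holds for any infinite group, so non-amenability is not what kills $p_0$; its actual role is the reducedness of $\mathrm H^1$, i.e.\ the very existence of $p_1$ as an element over $\Csr(G)$ (cf.\ the remark following Corollary \ref{Corrollary B}). Finally, to meet the hypothesis of Theorem \ref{main thm1} that there \emph{is} a non-vanishing higher Kazhdan projection, one should also record that $[p_1]\neq 0$, which the paper obtains from $\tau([p_1])=\beta^{(2)}_1(G)\neq 0$ for non-amenable virtually free groups.
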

\begin{remark}
    Amenable groups $G$ are excluded in the above corollary because, in this case, the group cohomology $\mathrm H^1(G, \ell^2(G))$ is not Hausdorff hence not reduced, which prevents the existence of higher Kazhdan projections in degree 1. (See \cite[Corollary III.2.4]{Guichardet} and \cite[Proposition 16]{Bader-Nowak}.)
\end{remark}
We denote by $\langle{g}\rangle$ the conjugacy class of $g\in G$. In \cite{Pooya-Wang} we defined the notion of delocalised $\ell^2$-Betti numbers for hyperbolic groups as K-theory pairing between the delocalised traces associated with the conjugacy classes and the K-theory class of higher Kazhdan projections. (See Section \ref{sec: Examples})

Let $\mathcal {F}_G :=  \left\langle \left\{ \frac{1}{|F|} \;\middle|\; F \leq G \text{ finite} \right\}\right\rangle \subseteq \mathbb{Q} $ denote the additive subgroup of $\mathbb Q$ generated by the inverses of the orders of finite subgroups of $G$.

\begin{introcorollary} (See Corollary \ref{cor: nonzero del betti})\label{Corrollary C}
    Let $G$ be a non-amenable finitely generated virtually free group acting properly on a tree $X$, and let 
    $g\in G$ fix vertices, but no edges of $X$.
    Then the first delocalised $\ell^2$-Betti numbers of $G$ are non-zero and they satisfy
    \[
       \beta^{(2)}_{1, \langle{g}\rangle} (G) \in \mathcal {F}_G.  
       \]    
\end{introcorollary}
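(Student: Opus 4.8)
The plan is to feed the explicit description of $[p_1]$ from \cref{cor: tree} into the very definition of the delocalised $\ell^2$-Betti number as a K-theory pairing. By definition (see \cite{Pooya-Wang} and \cref{sec: Examples}) one has $\beta^{(2)}_{1,\langle g\rangle}(G)=\tau_{\langle g\rangle}([p_1])$, where $\tau_{\langle g\rangle}$ is the delocalised trace attached to the conjugacy class $\langle g\rangle$, given on the group algebra by $\tau_{\langle g\rangle}\bigl(\sum_{s}a_s\,s\bigr)=\sum_{s\in\langle g\rangle}a_s$. Because $g$ fixes a vertex of $X$ and the vertex stabilisers are finite, $g$ is a torsion element; this is precisely the setting in which, for a (virtually free, hence hyperbolic) group $G$, the functional $\tau_{\langle g\rangle}$ is a well-defined trace descending to $\mathrm{K}_0(\Csr(G))$. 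Since $G$ is finitely generated virtually free, the quotient graph $G\backslash X$ is finite, so the sums below are finite and the whole computation reduces to evaluating $\tau_{\langle g\rangle}$ on the averaging projections $\rho_\sigma=\frac{1}{|G_\sigma|}\sum_{h\in G_\sigma}h$.

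The elementary identity
\[
\tau_{\langle g\rangle}(\rho_\sigma)=\frac{1}{|G_\sigma|}\,\bigl|\langle g\rangle\cap G_\sigma\bigr|
\]
follows at once from the definition of $\tau_{\langle g\rangle}$ and the support of $\rho_\sigma$. Applying $\tau_{\langle g\rangle}$ to the identity $-[p_1]=\sum_{v}[\rho_v]-\sum_{e}[\rho_e]$ of \cref{cor: tree} then yields
\[
-\beta^{(2)}_{1,\langle g\rangle}(G)=\sum_{v\in\,\mathrm{vert}\,G\backslash X}\frac{|\langle g\rangle\cap G_v|}{|G_v|}-\sum_{e\in\,\mathrm{edge}\,G\backslash X}\frac{|\langle g\rangle\cap G_e|}{|G_e|}.
\]

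Next I would invoke the geometric hypotheses on $g$. The property of fixing no edge is conjugation-invariant, since if $hgh^{-1}$ fixed an edge $e$ then $g$ would fix $h^{-1}e$; hence $\langle g\rangle\cap G_e=\emptyset$ for every edge $e$ and the entire edge sum vanishes. Dually, as $g$ fixes at least one vertex, there is a vertex orbit representative $v$ with $g\in\langle g\rangle\cap G_v$, so the corresponding term is at least $1/|G_v|>0$; every other vertex term is non-negative. Consequently the vertex sum is strictly positive and $\beta^{(2)}_{1,\langle g\rangle}(G)\neq 0$.

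For the membership statement, each surviving term $|\langle g\rangle\cap G_v|/|G_v|$ is an integer multiple of $1/|G_v|$, and $G_v\leq G$ is finite, so $1/|G_v|\in\mathcal{F}_G$; as $\mathcal{F}_G$ is an additive subgroup of $\mathbb{Q}$ and the sum is finite, we conclude $\beta^{(2)}_{1,\langle g\rangle}(G)\in\mathcal{F}_G$. The step demanding the most care---and the main obstacle---is the analytic input behind the pairing: one must know that $\tau_{\langle g\rangle}$ extends to a trace on a dense, holomorphically closed subalgebra of $\Csr(G)$ containing the relevant projections, and that it may be evaluated term by term on the class $[p_1]$. It is exactly the hyperbolicity of $G$ together with the finite order of $g$ (forced by the fixed-vertex hypothesis) that guarantee this, as established in \cite{Pooya-Wang}.
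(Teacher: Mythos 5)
Your argument is correct and follows essentially the same route as the paper's proof of Corollary \ref{cor: nonzero del betti}: apply the delocalised trace $\tau_{\langle g\rangle}$ to the identity of \cref{cor: tree}, note that the hypothesis that $g$ fixes no edge kills every edge term (so no cancellation between dimensions can occur), and observe that the surviving vertex terms are non-negative with at least one strictly positive, each lying in $\mathcal{F}_G$. Your write-up is in fact slightly more explicit than the paper's, since you record the formula $\tau_{\langle g\rangle}(\rho_\sigma)=|\langle g\rangle\cap G_\sigma|/|G_\sigma|$ and the conjugation-invariance of the no-fixed-edge condition, both of which the paper leaves implicit.
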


The paper is organised as follows: Section~\ref{sec: Kazhdan projections} extends the definition of Kazhdan projections to the broader context of type $FP_{n}$ groups, beyond those of type $F_n$. Section~\ref{sec: Euler characteristics} reviews the notion of $K$-homological Euler characteristics and their image under the Baum-Connes assembly map. Section~\ref{sec: Main theorem} presents our main result, establishing a connection between higher Kazhdan projections as group invariants with equivariant Euler characteristics as space invariants, via the Baum-Connes assembly map. In Section \ref{sec: Examples} we recall delocalised $\ell^2$-Betti numbers and describe it for groups satisfying Theorem \ref{main thm1}. 
Finally, Section~\ref{sec: Applications} provides applications to virtually free groups and gives an explicit expression for the K-theory class of higher Kazhdan projections as a sum of averaging projections associated to the vertices and edges of the Bass–Serre tree. This yields concrete non-vanishing computations for delocalised $\ell^2$-Betti numbers.

\section*{Acknowledgment} 
HW is supported
by the grants 23JC1401900, NSFC 12271165 and in part by Science and Technology Commission of
Shanghai Municipality (No. 22DZ2229014).
The authors would like to thank 
Alain Valette for his helpful comments that help to improve the presentation.

\section{Higher Kazhdan projections revisited}
\label{sec: Kazhdan projections} 
In this section we provide a more general setting for defining higher Kazhdan projections. The standard assumption in their definition (cf. \cite [Definition 4]{linowakpooya2020}) is that the group $G$ is of type $F_n$. We now generalise this to groups of type $FP_n$, which allows for applications beyond the topological setting.

Let us briefly recall the relevant finiteness properties. 
A group $G$ is of type $F_n$ if the classifying space $BG$ admits a model whose $n$-skeleton is finite.
A group $G$ is of type $FP_{n}$ over $\mathbb C$ if the trivial $\mathbb CG$-module $\mathbb C$ admits a resolution
\begin{equation}\label{fp res}
  \cdots \ra N_{k} \ra \cdots \ra N_{n} \stackrel{\delta_n}{\ra} N_{n-1} \ra \cdots \ra N_1\stackrel{\delta_0}{\ra} N_0{\ra} \mathbb C \ra 0,
\end{equation}
where each $N_i$ for $i\leq n$ is a finitely generated projective $\mathbb CG$-module. 

Let $(\pi, H)$ be a unitary representation of $G$. In \cite{linowakpooya2020} higher Kazhdan projections were defined under the assumption that $G$ is type $F_{n+1}$ over $\mathbb R$.  As we will soon switch to $\mathbb CG$-modules rather than $\mathbb RG$-modules, we discuss the process of defining higher Kazhdan projections directly for free $\mathbb CG$-modules.

Given a model of $BG$, consider a free resolution of the trivial $\mathbb CG$- module $\mathbb C$ 
\[
	\cdots \ra \mathbb C G^{\oplus k_2} \rightarrow \mathbb C G^{\oplus k_1} \rightarrow \mathbb C G \ra  \mathbb C \ra 0,
\]
where $k_i$ denotes the number of $i$-cells in the chosen model of $BG$, and the complex is equipped with the standard augmentation. 
Applying $\mathrm{Hom}_{\mathbb C G}(\cdot, H)$, and using the identification 
$ \mathrm{Hom}_{\mathbb C G} (BG^{(n)}, H) \simeq H^{\oplus k_n}$, 
we obtain a cochain complex of Hilbert spaces and bounded maps $d^i$ 
	\[
           H 
          \xrightarrow{d^0} 
        H^{\oplus k_1} 
		\rightarrow
		\cdots 
		\rightarrow H^{\oplus k_{n-1}} \xrightarrow{d^{n-1}} 
		H^{\oplus k_n} \xrightarrow{d^{n}} 
		H^{\oplus k_{n+1}} 
		\rightarrow 
		\cdots.
	\]

Using this complex we define the higher Laplace operator $\Delta_n$ acting on $H^{\oplus k_n}$ by
\[
  \Delta_n = d^{n} (d^{n})^* + (d^{n+1})^* d^{n+1} \in \mathrm M _{k_n} (\mathbb CG)
\]
More generally, given a unitary representation $(\pi,H)$ of $G$ consider the operator $\pi(\Delta_n) \in \pi \overline{(\mathbb CG)}$ (we will skip the reference to the $\pi$ if it will be clear from the context). 

Higher Laplace operators are related to the notion of reduced cohomology. The $n$-th reduced cohomology $\overline{\mathrm H}^n$ is defined as the quotient 
$\ker d^n / {\overline {\mathrm{im} d^{n-1}}}$, where ${\overline {\mathrm{im} d^{n-1}}}$ is the closure of the $n-1$-coboundary.  Note that $\overline{\mathrm H}^n$ is a subspace of $\ker d^n$ and hence a Hilbert space. Cohomology in degree $n$ is reduced if $\overline{\mathrm H}^n = \mathrm H^n$.
In this case, by the Hodge-de Rahm isomorphism we have
\[
  \ker(\Delta_n) \simeq \overline{\mathrm H}^n(G, H)
\]

As in \cite{linowakpooya2020} a higher Kazhdan projection in degree $n$ associated to a chosen model of $BG$ and a given family $\mathcal F$ of unitary representations is the projection $p_n \in M_{k_n}(\Cs_{\mathcal F} (G))$ whose image under any unitary representation in $\mathcal F$ is the orthogonal projection 
$$p_n\colon H^ {\oplus k_n} \ra \ker(\Delta_n).$$ 

We now generalise this setup. 
Assume that $G$ is of type $FP_{n+1}$ over $\mathbb C$, and consider a resolution as in (\ref{fp res}).  Since $N_i$'s are finitely generated projective $\mathbb CG$-modules, for $i\leq n+1$, we have $N_i \simeq \mathbb CG^{\oplus k_i}\, q_i$, where $q_i \in M_{k_i}(\mathbb CG)$. Each connecting map $d^i \colon N_{i} \ra N_{i+1}$ is defined by $d^i(vq_{i}) = (T_iv)q_{i+1}$, where the matrix $T_i \in M_{k_i\times k_{i+1}} (\mathbb CG)$ and $q_i$ and $q_{i+1}$ are the projections associated with $N_i$ and $N_{i+1}$, respectively. Given $(\pi, H)$, applying
the functor $\mathrm {Hom}_{\mathbb C G}(\cdot, H)$ gives rise to the cochain complex $$C^i(N_{\bullet}, H):=\mathrm{Hom_{\mathbb C G}}(N_{\bullet}, H),$$
where each $C^i(N_i, H)=\mathrm{Hom_{\mathbb C G}}(N_i, H) \simeq H^{\oplus k_i}q_i$ is a Hilbert space. By abuse of notation we still denote by $d^i$ the coboundaries in the cochain complex of Hilbert spaces.
The higher Laplace operator $\Delta_n$ can be defined as before
$$\pi(\Delta_n) = d^{n} (d^{n})^* + (d^{n+1})^* d^{n+1} \in q_n M_{k_n}(\pi(\mathbb CG)) q_n \subseteq  M_{k_n}(\pi(\mathbb CG))$$ acting on $C^n(N_n, H)$. Thanks to the Hodge-de Rham isomorphism its Kernel is the $n$-reduced cohomology group.

This allows us to formulate the following definition.

\begin{definition} \label{def: higher kazhdan gen}
    Let $G$ be a group of type $FP_{n+1}$ over $\mathbb C$. Let $\mathcal F $ be a family of unitary representations of $G$. A higher Kazhdan projection $p_n$ in degree $n$ is a projection in (a corner of) 
    $\mathrm{M}_{k_n}(\Cs_ {\mathcal F}(G))$ for some $k_n \in \mathbb N$ whose image under any unitary representation $(\pi, H) \in \mathcal F$ is the orthogonal projection 
    \[p_n \colon C^n(N_n, H) \ra \ker \pi(\Delta_n).
    \]
\end{definition}

We can view higher Kazhdan projection $p_n$ as the strong operator  limit of a family of heat operators
$p_n= \lim_{t\to \infty} e^{-t\pi(\Delta_n)}$ for each $(\pi, H)$. This operator converges in norm to 
$p_n \in M_{k_n}(\Cs_{\mathcal F}(G))$ provided $\pi(\Delta_n)$ has a spectral gap.
A characterisation for existence of spectral gap for the higher Laplace operator $\Delta_n$ in terms of reduced cohomology is given in \cite{Bader-Nowak}.  This will be our tool to verify existence of spectral gap.
\begin{proposition} \label{lem: spectral gap tool}
\cite[Proposition 16]{Bader-Nowak}
    Let $(\pi, H)$ be a unitary representation of $G$. The higher Laplace operator $\pi(\Delta_n)$ has a spectral gap in $\mathrm{M}_{k_n}(\Cs_{\pi} {\mathcal n}(G))$ if and only if $\mathrm H^n(G, H)$ and $\mathrm H^{n+1}(G, H)$ are reduced. 
\end{proposition}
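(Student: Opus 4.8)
The plan is to decompose the Hodge Laplacian into its two nonnegative summands and to match the spectral gap of each summand with a closed-range (hence reducedness) condition for a single coboundary map. Write $\pi(\Delta_n) = A + B$ on the Hilbert space $C^n(N_n,H)$ as the sum of its ``incoming'' summand $A := d^{n-1}(d^{n-1})^*$ and its ``outgoing'' summand $B := (d^n)^* d^n$, so that $\ker \pi(\Delta_n) = \ker A \cap \ker B$ is, by the Hodge--de Rham isomorphism, a copy of $\overline{\mathrm H}^n(G,H)$. The first step I would carry out is the observation that, since $C^\bullet(N_\bullet, H)$ is a cochain complex, $d^n d^{n-1} = 0$; consequently $AB = d^{n-1}(d^{n-1})^*(d^n)^* d^n = d^{n-1}(d^n d^{n-1})^* d^n = 0$, and symmetrically $BA = 0$. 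As $A, B \ge 0$, this forces $\overline{\mathrm{im}\,A} \subseteq \ker B$ and $\overline{\mathrm{im}\,B} \subseteq \ker A$, so the two ranges are mutually orthogonal and we obtain the orthogonal decomposition $C^n(N_n,H) = \ker\pi(\Delta_n) \oplus \overline{\mathrm{im}\,A} \oplus \overline{\mathrm{im}\,B}$, preserved by $\pi(\Delta_n)$, on which the operator acts as $0$, as $A$, and as $B$, respectively.

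Next I would use this decomposition to reduce the gap of $\pi(\Delta_n)$ to the gaps of $A$ and $B$ separately. Since the three summands are $\pi(\Delta_n)$-invariant and $\pi(\Delta_n)$ equals $A$ on $\overline{\mathrm{im}\,A}$ and $B$ on $\overline{\mathrm{im}\,B}$, the nonzero spectrum of $\pi(\Delta_n)$ is precisely the union of the nonzero spectra of $A|_{\overline{\mathrm{im}\,A}}$ and $B|_{\overline{\mathrm{im}\,B}}$. Hence $0$ is isolated in $\mathrm{spec}\,\pi(\Delta_n)$ if and only if it is isolated in both $\mathrm{spec}\,A$ and $\mathrm{spec}\,B$; that is, $\pi(\Delta_n)$ has a spectral gap if and only if each of $A$ and $B$ does. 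The orthogonality of the ranges is what prevents any interference between the two parts near $0$, and this is the conceptual heart of the argument.

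Finally I would invoke the standard functional-analytic fact that for a bounded operator $T$ between Hilbert spaces the following are equivalent: $\mathrm{im}\,T$ is closed; $T^*T$ is bounded below on $(\ker T)^\perp$; and $\inf\big(\mathrm{spec}(T^*T)\setminus\{0\}\big) > 0$ (equivalently the same for $TT^*$). Applying this to $T = d^{n-1}$ shows that $A$ has a spectral gap at $0$ if and only if $\mathrm{im}\,d^{n-1}$ is closed, which by definition means $\overline{\mathrm H}^n(G,H) = \mathrm H^n(G,H)$, i.e.\ $\mathrm H^n$ is reduced; applying it to $T = d^n$ shows $B$ has a gap if and only if $\mathrm{im}\,d^n$ is closed, i.e.\ $\mathrm H^{n+1}$ is reduced. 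Combining with the previous paragraph yields the stated equivalence. I expect the main obstacle to be formal rather than conceptual: one must verify the closed-range/spectral-gap dictionary at the level of the represented operators $\pi(d^i)$ (all of which are bounded, being images of matrices over $\mathbb CG$), and check that ``reduced'' is correctly read off as closedness of the image of the incoming coboundary, so that the degrees $n$ and $n+1$ land on the correct sides of the equivalence.
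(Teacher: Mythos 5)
Your argument is correct, but note that the paper does not actually prove this proposition: it is quoted verbatim from Bader--Nowak \cite[Proposition 16]{Bader-Nowak}, so there is no in-paper proof to compare against. That said, your route is essentially the standard one (and essentially the one in Bader--Nowak's Section 3), and the paper itself reproduces your key structural step later: the three-way orthogonal decomposition $C^n = \ker\pi(\Delta_n)\oplus\overline{\mathrm{im}\,A}\oplus\overline{\mathrm{im}\,B}$ you derive from $AB=BA=0$ is exactly the decomposition $C^i = C^i_-\oplus C^i_0\oplus C^i_+$ of Lemma \ref{lemma: kernel of Delta}, with $C^i_-=\overline{\mathrm{im}\,d_{i-1}}$ and $C^i_+=\overline{\mathrm{im}\,d_i^*}$. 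Two small points worth making explicit. First, your Laplacian is $\Delta_n = d^{n-1}(d^{n-1})^* + (d^n)^*d^n$, which is the indexing used in Lemma \ref{lemma: kernel of Delta}; the formula displayed in Section \ref{sec: Kazhdan projections} is shifted by one, so you should say which convention you follow. Second, the proposition asserts a spectral gap \emph{in} $\mathrm{M}_{k_n}(\mathrm{C}^*_\pi(G))$ while your argument computes the spectrum of $\pi(\Delta_n)$ as an operator on the Hilbert space $C^n(N_n,H)$; these agree by spectral permanence, since $\mathrm{M}_{k_n}(\mathrm{C}^*_\pi(G))$ is a $\mathrm{C}^*$-subalgebra of $\mathcal{B}(C^n(N_n,H))$ and $\mathrm{C}^*$-subalgebras are inverse-closed, but that identification should be stated rather than left implicit. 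With those two remarks added, the proof is complete and correct.
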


In this article, we will be working the left regular representation $(\lambda, \ell^2(G))$. Under the assumption of existence of a spectral gap for the associated higher Laplace operator $\Delta_n$, the projection $p_n$ defines a class $[p_n] \in \mathrm K_0(\Csr(G))$.
\section{Combinatorial Euler characteristics}
\label{sec: Euler characteristics}

In this section, we recall the construction of the combinatorial Euler characteristics given by Emerson and Meyer \cite{EM}.

Let $G$ be a discrete group. Let $X$ be a  countable, locally finite, simplicial complex on which $G$ acts continuously and simplicially.
Denote by $SX$ the set of all simplices of $X$, with discrete topology and the induced action of $G$.
Denote by $\xi:SX\to X$ the map that identifies each simplex with its barycenter.
Then $\xi$ induces the following $G$-equivariant $*$-morphism:
$$
\xi^*:C_0(X) \to C_0(SX).
$$

Equip the Hilbert space $l^2(SX)$ with the $\mathbb Z_2$-grading given by the parity of simplicies, i.e., 
for $\sigma\in SX$, the indicator function $\delta_{\sigma}$ belongs to $l^2(S^+X)$ (resp. $l^2(S^-X)$) if $\sigma$ is an even (resp. odd) dimensional simplex.
The action of the algebra $C_0(SX)$ on $l^2(S^{\pm}X)$ by point-wise multiplication induces a $G$-equivariant $*$-morphism, denoted by $\phi$: 
\[
\phi: C_0(SX)\rightarrow \mathcal{B}(l^2(SX)). 
\]
By the discreteness of the space $SX$, the image of $\phi$ lies in the algebra $\mathcal{K}(l^2(SX))$ of compact operators. 
Moreover, the composition of $\phi$ with ${\xi}^*$ induces a $G$-equivariant $*$-morphism:
\[
\phi\circ {\xi}^*: C_0(X)\rightarrow \mathcal{K}(l^2(SX)), 
\]
which yields a $G$-equivariant $KK$-theory class
\[
[(l^2(SX), \phi\circ {\xi}^*, 0)]\in \mathrm{KK}^G_0(C_0(X), \mathbb C). 
\]

\begin{definition} \cite[Definition 22]{EM}
    Let $G$ be a discrete group. Let $X$ be a simplicial complex as above. The combinatorial $G$-equivariant Euler characteristic of $X$ is given by
    \[
    [\mathrm{Eul}_{X}^{cmb}]:=[(l^2(SX), \phi\circ {\xi}^*, 0)]\in \mathrm{KK}^G_0(C_0(X), \mathbb C).
    \]
\end{definition}

\textbf{Notation}: Throughout this paper, for a finite subgroup $H$ of $G$, we denote by $\rho_H$ the averaging projection in the group algebra $\mathbb{C}H$ associated to the trivial representation:
$$
\rho_H:=\frac{1}{|H|}\sum_{h\in H} h,
$$
where $|H|$ stands for the cardinality of $H$. Moreover, when $X$ is a model for $\underline EG$, we denote by $\rho_{\sigma}$ the averaging projection associated to the finite isotropy group $G_{\sigma}$ of the barycentre $\xi_{\sigma}$ of a simplex $\sigma$ in $SX$. These projections belong to $\mathbb{C}G$ by inclusion.
\begin{remark}
    Without loss of generality, we require that $G_{\sigma}$ is the same as the isotropy group of a simplex $\sigma$ in $SX$.
\end{remark}



     We recall the description of combinatorial Euler characteristics in Section 5 of \cite{EM} as follows. 
     Let $X$ be a $G$-simplicial complex.
     For a subgroup $H\subseteq G$ and each connected component $A$ of the set $X^H$ of fixed points under the action of $H$, let $\mathrm{dim}_{H,A}\in \mathrm{KK}^{G}_0(C_0(X),\bC)$ be the class given by the homomorphism:
     \begin{equation*}
              C_0(X) \to C_0(G/H) \subseteq
     \mathcal{K}(l^2(G/H))
     \end{equation*}
sending $f\in C_0(X)$ to the operator of multiplication by the  function $gH \mapsto f(gx)$,        
     where $x$ is a point in $A$. It is clear that $\mathrm{dim}_{H,A}$ is independent of the choice of $x$ by homotopy invariance of Kasparov cycles.
     Moreover, it can be checked that $\mathrm{dim}_{gHg^{-1},gA}=\mathrm{dim}_{H,A}$ for every $g\in G$ due to unitary equivalence, and especially, $\mathrm{dim}_{H,gA}=\mathrm{dim}_{H,A}$ if $g$ belongs to the normaliser $N(H)$ of $H$.
     Let $S(H,A)\subseteq SX$ be the set of all simplices of $A$ whose isotropy group is exactly $H$ and let $\chi(X,H,A)$ denote the alternating sum of the numbers of $n$-simplices in $N(H)\backslash S(H,A)$. 
     Let $C(G)$ denote the set consisting of conjugacy classes $\langle H\rangle$ of subgroups $H$ such that $H$ is the isotropy group of some simplex in $X$. For a conjugacy class $\langle H\rangle \in C(G) $, let $A$ run through the set of connected components of $N(H)\backslash X^H$.
     Then the combinatorial Euler characteristic of $X$ can be written as
    \begin{equation*}
    \label{sum}
    [\mathrm{Eul}^{\mathrm{cmb}}_X]=\sum_{\langle H\rangle\in C(G),A} \chi(X,H,A)\cdot\mathrm{dim}_{H,A}.
    \end{equation*}

    Assume that $X$ is a model for $\underline EG$, then the isotropy groups are finite, and so are the elements in $ C(G)$.
    Moreover, due to Theorem 1.9 in \cite{Lueck05}, $X^H$ is connected for each finite subgroup $H$.
    Hence, the reference to  $A$ in the above equality can be removed, and we will write $\mathrm{dim}_{H,A}$, $S(H,A)$ and $\chi(X,H,A)$ for $\mathrm{dim}_{H}$, $S(H)$ and $\chi(X,H)$, respectively.
    Thus, the equality above can be simplified to 
    \begin{equation} \label{dim}
        [\mathrm{Eul}^{\mathrm{cmb}}_X]=\sum_{\langle H\rangle\in C(G)} \chi(X,H)\cdot\mathrm{dim}_{H}.
    \end{equation}
   
The following statement was mentioned in 
Section~5 of \cite{EM}. We provide a proof here for the readers' convenience.  
\begin{lemma}
\label{lemma: Euler}
Let $G$ be a discrete group. Suppose there exists a cocompact model $X$ for $\underline EG$. Then the image of the combinatorial Euler characteristic under the Baum–Connes assembly map $\mu_G$ is given by 
 $$
  \mu_0^G([\mathrm{Eul}^{\mathrm{cmb}}_X])=\sum_{\sigma\in G \backslash SX}{{(-1)}^{|\sigma|}[\rho_{\sigma}]}\in \mathrm{K}_0(\Csr(G)),
 $$
 where $SX$ denotes the set of simplices of $X$,  $|\sigma|$ is the dimension of the simplex $\sigma$, and $\rho_{\sigma}$ is the projection associated with $\sigma$.
\end{lemma}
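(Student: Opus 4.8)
The plan is to decompose the combinatorial Euler cycle over the $G$-orbits of simplices and to reduce the entire computation to a single orbit. Since $X$ is a cocompact model for $\underline EG$, the simplex set $SX$ is a discrete, proper, cocompact $G$-space, so it splits into finitely many orbits $SX=\bigsqcup_{\sigma\in G\backslash SX}G\sigma$ with $G\sigma\cong G/G_\sigma$ and $G_\sigma$ finite. Correspondingly $\ell^2(SX)=\bigoplus_{\sigma}\ell^2(G/G_\sigma)$, and this splitting is compatible with the parity grading because $G$ preserves dimensions of simplices. In $\mathrm{KK}^G_0(C_0(X),\bC)$ the Euler cycle therefore decomposes as $[\mathrm{Eul}^{\mathrm{cmb}}_X]=\sum_{\sigma\in G\backslash SX}(-1)^{|\sigma|}\,\mathrm{dim}_{G_\sigma}$, where the sign $(-1)^{|\sigma|}$ records whether the orbit sits in the even part $\ell^2(S^+X)$ or the odd part $\ell^2(S^-X)$; this is consistent with \eqref{dim}, in which orbits are instead grouped by the conjugacy class of their isotropy. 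Thus it will suffice to prove the single-orbit identity $\mu_0^G(\mathrm{dim}_H)=[\rho_H]$ for each finite isotropy group $H$.

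To establish this identity I would first note that the homomorphism defining $\mathrm{dim}_H$ factors as $C_0(X)\xrightarrow{i_H^*}C_0(G/H)\hookrightarrow\mathcal K(\ell^2(G/H))$, where $i_H\colon G/H\to X,\ gH\mapsto gx$ is the $G$-equivariant orbit inclusion through a point $x\in X^H$ (which exists, and renders $i_H^*$ well defined, because $X^H\neq\emptyset$ for finite $H$). Hence $\mathrm{dim}_H$ is the image under the map on $G$-equivariant K-homology induced by $i_H$ of the fundamental class $[\ell^2(G/H),m,0]\in\mathrm{KK}^G_0(C_0(G/H),\bC)$ of the orbit, with $m$ the multiplication representation. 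By naturality of the Baum--Connes assembly map with respect to $i_H$, the element $\mu_0^G(\mathrm{dim}_H)$ equals the image of this fundamental class under the assembly map of the proper cocompact $G$-space $G/H$.

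Next I would invoke the standard description of the assembly map on one orbit. Under the induction isomorphism $\mathrm{KK}^G_0(C_0(G/H),\bC)\cong\mathrm{KK}^H_0(\bC,\bC)=R(H)$ the fundamental class corresponds to the trivial representation, since the fibre of $\ell^2(G/H)$ over the base point $eH$ is one-dimensional and carries the trivial $H$-action. For finite $H$ the orbit assembly map is the homomorphism $R(H)=\mathrm K_0(\bC H)\to\mathrm K_0(\Csr(G))$ induced by the inclusion $\bC H=\Csr(H)\hookrightarrow\Csr(G)$, and it carries the class of the trivial representation (which is exactly $[\rho_H]$, since $\rho_H$ is the projection onto the trivial isotypical summand of $\bC H$) to $[\rho_H]\in\mathrm K_0(\Csr(G))$. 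This yields $\mu_0^G(\mathrm{dim}_H)=[\rho_H]$.

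Summing the single-orbit identity over the decomposition of the first paragraph then gives $\mu_0^G([\mathrm{Eul}^{\mathrm{cmb}}_X])=\sum_{\sigma\in G\backslash SX}(-1)^{|\sigma|}[\rho_\sigma]$; equivalently, substituting into \eqref{dim} gives $\sum_{\langle H\rangle}\chi(X,H)[\rho_H]$, which re-indexes to the same expression because two simplices with isotropy exactly $H$ are $G$-equivalent precisely when they are $N(H)$-equivalent, while $\chi(X,H)=\sum_{\tau\in N(H)\backslash S(H)}(-1)^{|\tau|}$ and $[\rho_\sigma]$ depends only on $\langle G_\sigma\rangle$. I expect the single-orbit identity $\mu_0^G(\mathrm{dim}_H)=[\rho_H]$ to be the main obstacle: it relies on naturality of assembly under the orbit inclusion together with the explicit form of the assembly map for $G/H$ with $H$ finite, namely the identification of the fundamental class with the trivial $H$-representation and of the orbit assembly map with the inclusion $\bC H\hookrightarrow\Csr(G)$; the parity grading must also be tracked carefully to produce the signs $(-1)^{|\sigma|}$.
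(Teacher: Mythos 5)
Your proposal is correct and follows essentially the same route as the paper: the decisive step in both is the single-orbit identity $\mu_0^G(\mathrm{dim}_H)=[\rho_H]$, obtained from naturality of the assembly map under the orbit inclusion together with the identification of the fundamental class of $G/H$ with the trivial representation $[1_H]\in R(H)$ and of the orbit assembly map with the inclusion $\mathbb{C}H\hookrightarrow \Csr(G)$. The only (minor) difference is that you derive the orbit decomposition $[\mathrm{Eul}^{\mathrm{cmb}}_X]=\sum_{\sigma\in G\backslash SX}(-1)^{|\sigma|}\mathrm{dim}_{G_\sigma}$ directly from the definition by splitting $\ell^2(SX)$ over $G$-orbits and tracking the parity grading, whereas the paper starts from the Emerson--Meyer formula \eqref{dim} and re-indexes the sum over conjugacy classes of isotropy groups into a sum over orbits via the map $\varphi$; the two are equivalent pieces of bookkeeping.
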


\begin{proof}
 We need to show that
    \begin{equation}
        \label{sum of dimH}
        \sum_{\langle H\rangle\in C(G)} \chi(X,H)\cdot\mathrm{dim}_{H}=\sum_{\sigma\in G \backslash SX}{(-1)}^{|\sigma|}\cdot\mathrm{dim}_{G_{\sigma}}.
    \end{equation}
    This together with (\ref{dim}) then yields the result.
     Consider the following surjective map:
    \begin{align*}
       \varphi: \{ \text{orbits in}\ G\backslash SX \} &\longrightarrow C(G)\\
        G\sigma &\mapsto \langle G_{\sigma}\rangle,\quad \forall \sigma\in SX.
    \end{align*}
    Fix an arbitrary element $\langle H \rangle\in C(G)$. Its preimage is
    $\varphi^{-1}(\langle H \rangle)$ is given by the union of certain finitely many disjoint orbits.
    By choosing a representative for each orbit such that its isotropy group is exactly $H$, we obtain
    \begin{align}
    \label{orbit}
        \varphi^{-1}(\langle H \rangle) = \bigsqcup_{\sigma\in r\left(\varphi^{-1}(\langle H\rangle\right))}G\sigma,
    \end{align}
    for a finite subset $r\left(\varphi^{-1}(\langle H\rangle)\right)$ of $SX$ satisfying $\forall \sigma \in r\left(\varphi^{-1}(\langle H\rangle\right))$, $G_{\sigma}=H$. 
    Since $S(H)$ is the set of all simplices (of $X^H$) whose isotropy group is exactly $H$ (\ref{orbit}) implies that
    $$
    S(H)=\bigsqcup_{\sigma\in r\left(\varphi^{-1}(\langle H\rangle\right))}N(H)\sigma.
    $$
    Hence,
    $$
    N(H)\backslash S(H)=\left\{[\sigma]|\sigma\in r\left(\varphi^{-1}(\langle H\rangle\right) \right\},
    $$
    where $[\sigma]$ denotes the class of $\sigma$ in $N(H)\backslash S(H)$. 
    Since $\chi(X,H)$ is by definition the alternating sum of numbers of $n$-simplices in $N(H)\backslash S(H)$, we obtain
    $$
    \chi(X,H)= \sum_{\sigma\in r\left(\varphi^{-1}(\langle H\rangle\right))}(-1)^{|\sigma|}.
    $$
    This implies that
    \begin{align} \label{dimH}
        \chi(X,H)\cdot\mathrm{dim}_{H}& = \sum_{\sigma\in r\left(\varphi^{-1}(\langle H\rangle\right))}(-1)^{|\sigma|}\cdot\mathrm{dim}_{H} \notag\\
        &:=\sum_{\sigma\in r\left(\varphi^{-1}(\langle H\rangle\right))}(-1)^{|\sigma|}\cdot\mathrm{dim}_{G_{\sigma}} .
    \end{align}
    For two elements $\langle H \rangle\neq \langle H' \rangle\in C(G)$, it is clear that the orbits of their preimages are disjoint, i.e.,
    $$
    \varphi^{-1}(\langle H\rangle) \cap \varphi^{-1}(\langle H' \rangle)=\bigsqcup_{\sigma\in r\left(\varphi^{-1}(\langle H\rangle\right))}G\sigma \, \bigcap \bigsqcup_{\sigma '\in r\left(\varphi^{-1}(\langle H'\rangle\right))}G\sigma '= \emptyset.
    $$
    Thus, the disjoint union of all preimages of $C(G)$ is a subset of $G\backslash SX$.
    Moreover every orbit $G\sigma \in G\backslash SX$ maps to a conjugacy class $\langle G_{\sigma} \rangle\in C(G)$ under the map $\varphi$, thus it belongs to the preimage  $\varphi^{-1}(\langle G_{\sigma}) \rangle$.
    Therefore, the disjoint union of all preimages of $C(G)$ is exactly $G\backslash SX$. This together with equality (\ref{dimH}) yield
    \begin{equation*}
        \sum_{\langle H\rangle\in C(G)} \chi(X,H)\cdot\mathrm{dim}_{H}=\sum_{\langle H\rangle\in C(G)}\sum_{\sigma\in r\left(\varphi^{-1}(\langle H\rangle\right))}(-1)^{|\sigma|}\cdot\mathrm{dim}_{G_{\sigma}}=\sum_{\sigma\in G\backslash SX}(-1)^{|\sigma|}\cdot\mathrm{dim}_{G_{\sigma}}.
    \end{equation*}
    
    
    Next, we show that for every finite subgroup $H$ that occurs as the isotropy group of some simplex in $X$
    \begin{align}
    \label{mudimH}
    \mu_0^G(\mathrm{dim}_{H})=[\rho_H].
    \end{align}
    The cycle $\mathrm{dim}_{H}=\left[C_0(X)\rightarrow C_0(G/H)\subseteq\mathcal{K}(l^2(G/H))\right]$ in $\mathrm{K}_0^G(X)$
    is the image of 
\[
\left[C_0(G/H)\rightarrow \mathcal{K}(l^2(G/H))\right]\in \mathrm{K}_0^G(G/H)
\]
   under the restriction map $C_0(X)\rightarrow C_0(G/H).$ 
   Since $C_0(G/H)$ is the induced representation of the trivial representation of $H$ to $G$, the cycle
$\left[C_0(G/H)\rightarrow \mathcal{K}(l^2(G/H))\right]$ in $\mathrm{K}_0^G(G/H)$
is the image of the trivial representation 
$[1_H]\in R(H)$ under the isomorphism
\[
R(H)\cong \mathrm{K}_0^G(G/H).
\]
By the naturality of the assembly maps for $H$ and for $G$ under the inclusion $H\subseteq G$, i.e., the commutativity of the following diagram
\begin{equation*}
\xymatrix@+2pc{ R(H) \ar[r]^{\mu_H}_{\cong} \ar[d]_{\cong} & \mathrm{K}_0(\Csr(H)) \ar[d]_{\subset} \\
                 \mathrm{K}_0^G(G/H) \ar[r]^{\mu_G} &      \mathrm{K}_0(C_r^*(G)) }
\end{equation*}
and the fact that $H\subseteq G$ induces the inclusion $\mathrm{K}_0(\Csr(H))\rightarrow \mathrm{K}_0(\Csr(G))$, we have
\[
\mu_G(\mathrm{dim}_{H})=\mu_G(\left[C_0(G/H)\rightarrow \mathcal{K}(l^2(G/H))])=\mu_H([1_H])=[\rho_H\right].
\]
Following our notation we have $\rho_H=\rho_{\sigma}$ when 
$H$ is the isotropy group of a simplex $\sigma$ in $X$.
Therefore, the conclusion follows from (\ref{sum of dimH}) and (\ref{mudimH}).
\end{proof}

\section{Combinatorial Euler characteristics as the preimage}
\label{sec: Main theorem}

This section presents our main result, Theorem \ref{main thm1}, which gives a concrete description of the K-theory class of the higher Kazhdan projection . The key step in our approach is identification of specific alternating sums of averaging projections associated to the image of the combinatorial Euler characteristic. To establish our proof, we first develop several intermediate results.

We assume that \( G \) is a discrete group that admits a \( G \)-compact universal proper \( G \)-space \( X \). Furthermore, we assume that \( X \)
can be chosen to be a $G$-finite simplicial complex equipped with a simplicial $G$-action. 
Here, \( G \)-finiteness means that the quotient space \( G \backslash X \) is finite, which in particular implies that \( X \) is finite dimensional. In short, we require that there is a $G$-finite model $X$ for $\underline E G$.

Let \( SX^i \) denote the set of \( i \)-simplices of \( X \), and let \( \mathbb{C}[SX^i] \) be the complex vector space whose basis is identified with the set $SX^{i}$.

  

We now state our main theorem. Its proof requires several technical results, which we present first. The proof will be given at the end of this section.

\begin{theorem}
\label{main thm1}
    Let $G$ be a discrete group for which there is a $G$-finite model $X$ for $\underline E G$. 
    Assume that there is only one non-vanishing higher Kazhdan $p_n$ projection associated with $G$, for some $n$.  
    Then there is an explicit formula for the $\mathrm{K}$-theory class $[p_n]$ employing the averaging projections associated with the orbits
    \begin{equation}
        \label{p1}
    (-1)^n[p_n] = \sum_{\sigma\in G \backslash SX}(-1)^{|\sigma|} [\rho_{\sigma}] \in \mathrm{K_0}(\Csr(G))).
    \end{equation}
     Moreover, the combinatorial Euler class   
   $
    [\mathrm{Eul}^{\mathrm{cmb}}_X] \in 
    \mathrm{K}^G_0(\underline{E}G)
    $
    satisfies
    \begin{equation}
        \label{p2}
        \mu_0^G([\mathrm{Eul}^{\mathrm{cmb}}_X]) = (-1)^n[p_n].
    \end{equation}
\end{theorem}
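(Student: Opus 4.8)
The plan is to deduce the two displayed identities \eqref{p1} and \eqref{p2} by combining a Hodge-theoretic Euler characteristic computation carried out inside $\mathrm{K}_0(\Csr(G))$ with the already-established Lemma~\ref{lemma: Euler}. Observe first that \eqref{p2} is immediate once \eqref{p1} is known, since Lemma~\ref{lemma: Euler} identifies $\mu_0^G([\mathrm{Eul}^{\mathrm{cmb}}_X])$ with $\sum_{\sigma\in G\backslash SX}(-1)^{|\sigma|}[\rho_{\sigma}]$. Thus the entire content is formula \eqref{p1}.

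First I would exhibit, from $X$ itself, the resolution used to define $p_n$. Since $X$ is a $G$-finite model for $\underline EG$ it is contractible, so the augmented cellular chain complex of $X$ is a finite resolution of the trivial module $\mathbb C$ over $\mathbb C G$. Because each isotropy group $G_\sigma$ is finite and we work over $\mathbb C$, the $i$-th chain module decomposes as $N_i\cong\bigoplus_{\sigma\in G\backslash SX^i}\mathrm{Ind}_{G_\sigma}^G\mathbb C$, which is finitely generated projective, and $\mathrm{Ind}_{G_\sigma}^G\mathbb C\cong\mathbb C G\,\rho_\sigma$. This realises $G$ as type $FP_\infty$ over $\mathbb C$ and gives
$$[N_i]=\sum_{\sigma\in G\backslash SX^i}[\rho_\sigma]\in\mathrm K_0(\Csr(G)).$$
Applying $\mathrm{Hom}_{\mathbb C G}(-,\ell^2(G))$ and the left regular representation produces the cochain complex $(C^\bullet,d)$ of Section~\ref{sec: Kazhdan projections}, with $[C^i]=[N_i]$ in $\mathrm K_0(\Csr(G))$.

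The core step is the $C^*$-algebraic Hodge–Euler identity
$$\sum_i(-1)^i[C^i]=\sum_i(-1)^i[\ker\Delta_i]\in\mathrm K_0(\Csr(G)).$$
Here I would first note that the hypothesis presupposes that each class $[p_i]$ exists in $\mathrm K_0(\Csr(G))$, hence by Proposition~\ref{lem: spectral gap tool} that every $\Delta_i$ has a spectral gap. The gap lets me run the Hodge decomposition inside the $C^*$-algebra: writing $B^i=\overline{\mathrm{im}\,d^{i-1}}$ and $R^i=\overline{\mathrm{im}(d^i)^*}$, the spectral projections $P_{\ker\Delta_i}$, $P_{B^i}$, $P_{R^i}$ are mutually orthogonal, sum to the identity $q_i$ of $C^i$, and all lie in $M_{k_i}(\Csr(G))$, being obtained from $\Delta_i$, $d^{i-1}(d^{i-1})^*$ and $(d^i)^*d^i$ by continuous functional calculus using the gap at $0$. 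The partial isometry $u_i=d^i f(|d^i|)$, with $f(t)=t^{-1}$ on the spectrum away from $0$ and $f(0)=0$, is likewise in the $C^*$-algebra and implements a Murray--von Neumann equivalence $P_{R^i}\sim P_{B^{i+1}}$. Substituting $[C^i]=[\ker\Delta_i]+[B^i]+[R^i]$, using $[R^i]=[B^{i+1}]$, and telescoping (the boundary terms $B^0$ and $B^{\dim X+1}$ vanish) cancels all coboundary contributions, leaving the displayed identity.

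Finally I would assemble the pieces. Since $\overline{\mathrm H}^i(G,\ell^2 G)=0$ for $i\neq n$ and $[p_n]=[\ker\Delta_n]$ by Definition~\ref{def: higher kazhdan gen}, the right-hand side above equals $(-1)^n[p_n]$, while the left-hand side equals $\sum_i(-1)^i\sum_{\sigma\in G\backslash SX^i}[\rho_\sigma]=\sum_{\sigma\in G\backslash SX}(-1)^{|\sigma|}[\rho_\sigma]$; this is \eqref{p1}, and \eqref{p2} follows via Lemma~\ref{lemma: Euler}. I expect the main obstacle to be this $C^*$-algebraic Hodge step: one must verify that the Hodge projections and the polar partial isometries genuinely lie in $M_{k_i}(\Csr(G))$ and not merely in the group von Neumann algebra, which is precisely where the spectral-gap hypothesis is indispensable. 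A secondary technical point is to arrange the simplicial model so that stabilizers fix simplices pointwise (e.g.\ after barycentric subdivision), ensuring the chain modules are $\bigoplus_\sigma\mathrm{Ind}_{G_\sigma}^G\mathbb C$ with trivial rather than orientation-twisted coefficients, so that the resulting classes match the averaging projections $\rho_\sigma$.
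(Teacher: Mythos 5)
Your proposal is correct and follows essentially the same route as the paper: build the projective resolution from the contractible $G$-finite complex $X$ (so $N_i\cong\bigoplus_\sigma\mathbb CG\,\rho_\sigma$), apply $\mathrm{Hom}_{\mathbb CG}(-,\ell^2(G))$, and use the spectral gap to place the Hodge projections and the polar partial isometries $d^if(|d^i|)$ inside matrices over $\Csr(G)$, yielding $\sum_i(-1)^i[C^i]=\sum_i(-1)^i[\ker\Delta_i]$ and then \eqref{p1}, with \eqref{p2} from Lemma~\ref{lemma: Euler}. The only difference is presentational: you telescope $[C^i]=[\ker\Delta_i]+[B^i]+[R^i]$ with $[R^i]\sim[B^{i+1}]$, whereas the paper packages the same decompositions into an explicit even/odd direct-sum isomorphism implemented by an operator $S$ in a matrix algebra over $\Csr(G)$ (Lemmas~\ref{lemma: kernel of Delta}, \ref{split} and Proposition~\ref{lem: step 2}).
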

Note that, in principle, to define the higher Kazhdan projection $p_n$, the group must be of type $FP_{n+1}$. However, since in the statement above we assume that $\underline E G$ is finite-dimensional, this finiteness condition on the group is no longer necessary. Next proposition demonstrates how to obtain a suitable projective resolution associated with a finite dimensional model. 
\begin{proposition}
    \label{proj resolution}
    Let $G$ be a discrete group with a $G$-finite model $X$ of $\underline{E} G$ of dimension $n+1$. Then the augmented chain complex associated with $X$ yields a finite length projective resolution of the trivial $\bC G$-module $\bC$ 
    \begin{align*} 
		0 \longrightarrow \bC[SX^{n+1}] \stackrel{\partial_{n}}{\longrightarrow} \cdots \stackrel{\partial_1}{\longrightarrow} \bC[SX^1] \stackrel{\partial_0}{\longrightarrow} \bC[SX^0] \stackrel{\epsilon}{\longrightarrow} \bC \to 0,
        \label{chain complex}\tag{*}
	\end{align*}
   where $\partial_{0},\cdots,\partial_{n}$ denote the boundary map and $\epsilon$ denotes the augmentation map. Moreover, for
   $i\in \{0,\cdots, n+1\}$ we have
   $$
   \bC[SX^i] \cong \bigoplus_{k\in \lambda_i}{\bC G\cdot \rho_{H_{i_k}}},
   $$
   for some finite index set $\lambda_i$ and (finitely many) finite subgroups $H_{i_k}$ of $G$. Further, the following two alternating sums coincide:
    \begin{equation}
    \label{alternating sum} 
        \sum_{\sigma\in G \backslash SX}{{(-1)}^{|\sigma|}[\rho_{\sigma}]}=
        \sum_{i=0}^{n+1}\sum_{k\in \lambda_i}(-1)^i[\rho_{H_{i_k}}].
    \end{equation}   
\end{proposition}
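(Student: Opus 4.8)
The plan is to work directly with the augmented simplicial chain complex of $X$ and to exploit the fact that $X$ is a model for $\underline{E}G$, so that all cell stabilisers are finite. First I would verify that the complex $(*)$ is a resolution of $\bC$. Since $X$ is contractible (it is a model for $\underline{E}G$, hence in particular $H$-equivariantly contractible for the trivial subgroup), its reduced simplicial homology with $\bC$-coefficients vanishes in every degree; the augmented chain complex therefore has trivial homology, i.e.\ it is exact, which is precisely the statement that $(*)$ resolves the trivial module $\bC$. Finiteness of the length is immediate from $G$-finiteness, which forces $X$ to be finite dimensional of dimension $n+1$, so the complex terminates at $\bC[SX^{n+1}]$.

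The heart of the argument is the identification of each chain module $\bC[SX^i]$ as a $\bC G$-module. The plan is to decompose $SX^i$ into $G$-orbits: choosing one representative simplex $\sigma$ for each orbit in $G\backslash SX^i$, the orbit $G\sigma$ is equivariantly isomorphic to the homogeneous space $G/G_\sigma$, where $G_\sigma$ is the (finite) stabiliser. Consequently the permutation module $\bC[G\sigma]$ is isomorphic to $\bC[G/G_\sigma]$. The key elementary computation is then that $\bC[G/G_\sigma]\cong \bC G\cdot\rho_{G_\sigma}$ as left $\bC G$-modules, where $\rho_{G_\sigma}=\tfrac{1}{|G_\sigma|}\sum_{h\in G_\sigma}h$ is the averaging projection: the map sending the coset $gG_\sigma$ to $g\rho_{G_\sigma}$ is a well-defined $\bC G$-module isomorphism onto the right ideal $\bC G\cdot\rho_{G_\sigma}$, well-definedness following from $h\rho_{G_\sigma}=\rho_{G_\sigma}$ for $h\in G_\sigma$. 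Taking the direct sum over orbit representatives, indexed by a finite set $\lambda_i$ (finite because $G\backslash X$ is finite) with finite subgroups $H_{i_k}:=G_{\sigma_k}$, yields
\[
  \bC[SX^i]\cong\bigoplus_{k\in\lambda_i}\bC G\cdot\rho_{H_{i_k}}.
\]
Each summand $\bC G\cdot\rho_{H_{i_k}}$ is a finitely generated projective $\bC G$-module (it is a direct summand of $\bC G$, being the image of the idempotent $\rho_{H_{i_k}}$), so the resolution is by finitely generated projectives, as required.

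For the final displayed equality \eqref{alternating sum}, I would simply reorganise the left-hand alternating sum over $G\backslash SX$ by dimension. Grouping the orbits according to the dimension $i=|\sigma|$ of their representatives, the sum $\sum_{\sigma\in G\backslash SX}(-1)^{|\sigma|}[\rho_\sigma]$ breaks into $\sum_{i=0}^{n+1}(-1)^i\sum_{\sigma\in G\backslash SX^i}[\rho_\sigma]$, and under the orbit-to-representative bijection with $\lambda_i$ this inner sum is exactly $\sum_{k\in\lambda_i}[\rho_{H_{i_k}}]$, giving the right-hand side. I expect the main obstacle to be purely bookkeeping rather than conceptual: one must take care that the identification $\bC[G/G_\sigma]\cong\bC G\rho_{G_\sigma}$ uses \emph{left} modules consistently and that the projection $\rho_{G_\sigma}\in\bC G$ appearing here is genuinely the same averaging projection $\rho_\sigma$ fixed in the Notation paragraph (this is guaranteed by the remark identifying $G_\sigma$ with the isotropy group of $\sigma$). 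A secondary point worth checking is that exactness of $(*)$ really needs only contractibility of $X$ over $\bC$, not any equivariant statement, since we are resolving the trivial module; the equivariant structure enters only through the orbit decomposition of the modules, not through the exactness.
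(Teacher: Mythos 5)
Your proposal is correct and follows essentially the same route as the paper's proof: exactness of the augmented complex from contractibility of $X$, the orbit decomposition $\bC[SX^i]\cong\bigoplus_k\bC[G/H_{i_k}]\cong\bigoplus_k\bC G\cdot\rho_{H_{i_k}}$ using finiteness of stabilisers from properness, and the regrouping of the alternating sum by dimension via a choice of orbit representatives. The only difference is that you spell out the isomorphism $\bC[G/G_\sigma]\cong\bC G\rho_{G_\sigma}$ explicitly, which the paper states without proof.
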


\begin{proof}
        Since $X$ is a universal proper $G$-space, and hence  contractible, we have that 
         $$\mathrm H_i(X)=\mathrm H_i(\mathrm{pt})=0,\quad i>0,$$
         and
         $$\mathrm H_0(X)=\mathrm H_0(\mathrm{pt})=\bC,$$
         i.e. its cohomology is concentrated in degree 0, where it is one-dimensional. This implies that the chain complex \eqref{chain complex} is exact, hence a resolution.
        Next, since $X$ is $G$-compact, for each $i$, the set of $i$-simplices $SX^i$ is a finite disjoint union of orbits, i.e.,
    $$
    SX^i= \bigsqcup_{k\in \lambda_i} G.x_{i_k},
    $$
    for some finite index set $\lambda_i$ and $i$-simplices $x_{i_k}$.
    Thus, $\{x_{i_k} \mid k\in \lambda_i \}$ provides a list of representatives of $G\backslash SX^i$.
    The orbit of any $i$-simplex $x$ can be identified with the quotient group of $G$ over $G_x$, the isotropy group of $x$, we obtain $\bC[SX^i]$ is a finite direct sum of $\bC G$-modules:
    $$
    \bC[SX^i]= \bigoplus_{k\in \lambda_i} \bC[G/G_{x_{i_k}}].
    $$
    By properness of the action of $G$ each isotropy group $G_{x_{i_k}}$ is a finite subgroup of $G$. Therefore, 
    $$
    \bC[SX^i]= \bigoplus_{k\in \lambda_i} \bC[G/G_{x_{i_k}}]=\bigoplus_{k\in \lambda_i} \bC G\cdot \rho_{G_{x_{i_k}}},
    $$
    i.e. $\bC[SX^i]$ is a finitely generated projective $\bC G$-module.
    Let $H_{i_k}$ in the statement be the isotropy group $G_{x_{i_k}}$ for every $i_k\in \lambda_i$, $i\in \{0,\cdots, n+1\}$. Equality (\ref{alternating sum}) follows from the fact that the set $\{x_{i_k} \mid k \in \lambda_i, i = 0, \ldots, n+1\}$ forms a complete set of representatives for the $G$-orbits in $SX$, that is, a choice of representatives of $G \backslash SX$.
\end{proof}

\begin{lemma}
\label{step1}
    Let $G$ be a discrete group. Let $X$ be a $G$-finite model for $\underline E G$ of dimension $n+1$. Further let $H_{i_k}$ be the isotropy group of $i$-simplices of $X$. 
    Then for every $i\in \{0,\cdots,n+1\}$, we have
    $$\mathrm{Hom}_{\mathbb{C}G}(\mathbb{C}[SX^{i}],l^2(G))\cong \bigoplus_{k\in \lambda_i}l^2(G/H_{i_k}).$$
\end{lemma}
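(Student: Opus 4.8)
The plan is to reduce the statement to a single orbit and then to a standard computation with the idempotent $\rho_H$. By Proposition \ref{proj resolution} we already have the $\mathbb{C}G$-module decomposition $\mathbb{C}[SX^i]\cong\bigoplus_{k\in\lambda_i}\mathbb{C}G\,\rho_{H_{i_k}}$, a finite direct sum of the projective modules $\mathbb{C}G\,\rho_{H_{i_k}}\cong\mathbb{C}[G/H_{i_k}]$. Since $\lambda_i$ is finite and the functor $\mathrm{Hom}_{\mathbb{C}G}(-,\ell^2(G))$ turns finite direct sums in the first variable into finite direct sums, I would first reduce to
$$\mathrm{Hom}_{\mathbb{C}G}(\mathbb{C}[SX^i],\ell^2(G))\cong\bigoplus_{k\in\lambda_i}\mathrm{Hom}_{\mathbb{C}G}(\mathbb{C}G\,\rho_{H_{i_k}},\ell^2(G)),$$
so that it suffices to treat a single summand $\mathrm{Hom}_{\mathbb{C}G}(\mathbb{C}G\rho_H,\ell^2(G))$ for a finite subgroup $H\leq G$.

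For fixed $H$ I would exploit that $\rho_H$ is an idempotent of $\mathbb{C}G$. A $\mathbb{C}G$-linear map $f\colon\mathbb{C}G\rho_H\to\ell^2(G)$ is determined by $v:=f(\rho_H)$, and because $\rho_H=\rho_H\rho_H$ one gets $v=\lambda(\rho_H)v$; conversely any $v$ with $\lambda(\rho_H)v=v$ arises from the assignment $a\rho_H\mapsto\lambda(a)v$, which is well defined (if $b\rho_H=0$ then $\lambda(b)v=\lambda(b)\lambda(\rho_H)v=\lambda(b\rho_H)v=0$) and $\mathbb{C}G$-linear. This yields the natural isomorphism
$$\mathrm{Hom}_{\mathbb{C}G}(\mathbb{C}G\rho_H,\ell^2(G))\;\cong\;\lambda(\rho_H)\,\ell^2(G),$$
the image of the orthogonal projection $\lambda(\rho_H)=\tfrac1{|H|}\sum_{h\in H}\lambda(h)$; equivalently this is the space $\ell^2(G)^H$ of $H$-invariant vectors, in accordance with Frobenius reciprocity $\mathrm{Hom}_{\mathbb{C}G}(\mathrm{Ind}_H^G\mathbb{C},\ell^2(G))\cong\ell^2(G)^H$. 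As a closed subspace of $\ell^2(G)$ it inherits a Hilbert-space structure, which is precisely the structure relevant to the cochain complex of Hilbert spaces from Section \ref{sec: Kazhdan projections}.

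Finally I would identify $\ell^2(G)^H$ with $\ell^2(G/H)$. A vector $v\in\ell^2(G)$ satisfies $\lambda(h)v=v$ for all $h\in H$ exactly when $g\mapsto v(g)$ is constant on the right cosets $Hg$; since $H$ is finite, restricting such $v$ to a transversal identifies $\ell^2(G)^H$ with $\ell^2(H\backslash G)$ (up to the normalising factor $|H|^{1/2}$ from the $\ell^2$-norm), and the bijection $Hg\mapsto g^{-1}H$ then gives $\ell^2(H\backslash G)\cong\ell^2(G/H)$ as Hilbert spaces. Reassembling the summands over $k\in\lambda_i$ produces the asserted isomorphism.

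I expect the only delicate point to be the bookkeeping in this last step: keeping the left/right coset conventions straight and tracking the $|H|^{1/2}$ normalisation so that one obtains a genuine Hilbert-space isomorphism rather than a bare linear bijection. Once the idempotent computation of the second paragraph is in place, everything else is formal.
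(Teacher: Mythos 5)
Your argument is correct and follows essentially the same route as the paper: decompose $\mathbb{C}[SX^i]$ into orbit summands $\mathbb{C}[G/H_{i_k}]\cong\mathbb{C}G\rho_{H_{i_k}}$, pull the finite direct sum out of $\mathrm{Hom}_{\mathbb{C}G}(-,\ell^2(G))$, and identify each summand with $\ell^2(G/H)$ via Frobenius reciprocity for the induced trivial representation. The only differences are cosmetic --- you make the reciprocity step explicit through the idempotent computation with $\lambda(\rho_H)$ and land in $\ell^2(H\backslash G)$ before flipping cosets (with the $|H|^{1/2}$ rescaling duly noted), whereas the paper invokes $\mathrm{Hom}_{\mathbb{C}G}(\mathbb{C}[G/H],\ell^2(G))\cong\mathrm{Hom}_{\mathbb{C}G}(\mathbb{C}G,\ell^2(G/H))$ in one line using the right-translation convention.
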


\begin{proof}
    Considering $l^2(G)^H=l^2(G/H)$ where $H$ is a subgroup of $G$ acting on $\ell^2(G)$ by right translation, one sees that
    $$
    \mathrm{Hom}_{\mathbb{C}G}(\mathbb{C}[G/H],l^2(G))\cong\mathrm{Hom}_{\mathbb{C}G}(\mathbb{C}G,l^2(G/H)).
    $$
    Following the proof of Proposition \ref{proj resolution}, write
    $$
    \bC[SX^i]= \bigoplus_{k\in \lambda_i} \bC[G/H_{i_k}],
    $$
     where $H_{i_k}$ is the isotropy group of a $i$-simplex in $SX$.
    Putting these together we infer that
    \begin{align*}
    \mathrm{Hom}_{\mathbb{C}G}(\mathbb{C}[SX^{i}],l^2(G))
    =& \mathrm{Hom}_{\mathbb{C}G}(\bigoplus_{k\in \lambda_i} \bC[G/{H_{i_k}}],l^2(G))\\
        \cong& \bigoplus_{k\in \lambda_i}\mathrm{Hom}_{\mathbb{C}G}(\bC[G/{H_{i_k}}],l^2(G))\\
        \cong& \bigoplus_{k\in \lambda_i}\mathrm{Hom}_{\mathbb{C}G}( \bC G,l^2(G/{H_{i_k}}))\\
        \cong& \bigoplus_{k\in \lambda_i}l^2(G/{H_{i_k}}).
    \end{align*}
    This finishes the proof.
\end{proof}

Given a suitable cochain complex of Hilbert spaces, we study two specific direct sum decompositions associated with it. A comparison between the associated projections derived from them leads us  obtaining the  desired description of higher Kazhdan projections.

Our finiteness assumption implies the existence of a finite-length resolution, and consequently, a finite-length cochain complex of Hilbert spaces and bounded linear operators: 
   \begin{equation}
       \label{cochain complex}
       C^0 \stackrel{d_0}{\longrightarrow} C^1 \stackrel{d_1}{\longrightarrow} \cdots \stackrel{d_{n}}{\longrightarrow} C^{n+1}.
   \end{equation}
Focusing on each term $C^i$ for $i = 0, \ldots, n+1$, we obtain a sequence of Hilbert spaces
   \begin{equation*}
   \label{Hseq2}
       0 \stackrel{}{\longrightarrow} \mathrm{ker}d_{i} \longrightarrow  C^i \stackrel{d_i}{\longrightarrow} \overline{\mathrm{im}d_{i}} \stackrel{}{\longrightarrow} 0.
   \end{equation*}
The sequence is, in general, not exact. However, using the polar decomposition for bounded linear operators on Hilbert spaces, one can modify it to obtain an exact sequence. 
\begin{equation}
   \label{Hseq3}
       0 \stackrel{}{\longrightarrow} \mathrm{ker}d_{i} \longrightarrow  C^i \stackrel{\tilde{d_i}}{\longrightarrow} \overline{\mathrm{im}d_{i}} \stackrel{}{\longrightarrow} 0.
   \end{equation}
Indeed, for each differential $d_i : C^i \to \overline{\mathrm{im} d_i}$, the polar decomposition yields a unitary operator $\tilde{d}_i$ on $C^i$ such that
$$
   d_i=\tilde{d_i} |d_i|,
   $$
   satisfying $\mathrm{ker}(\tilde{d_i})=\mathrm{ker}(|d_i|)=\mathrm{ker}d_i$, and $\mathrm{im}\tilde{d_i}=\overline{\mathrm{im}d_i}$.
The exactness of the sequences in \eqref{Hseq3} then follows.

\begin{lemma} (See \cite[Section 3]{Bader-Nowak})
    \label{lemma: kernel of Delta}
    For a cochain complex of Hilbert spaces and bounded linear operators as in \eqref{cochain complex}, one has the following orthogonal direct sum decompositions. 
    \begin{equation} 
    \label{direct sum: kerdi} 
     \ker d_i = \overline{\mathrm{im}\, d_{i-1}} \oplus \ker\Delta_i, \end{equation} 
     for $i = 0, \ldots, n$, and 
     \begin{equation} \label{direct sum: Cn+1} C^{n+1} = \overline{\mathrm{im}\, d_n} \oplus \ker\Delta_{n+1}, \end{equation} 
     where $\Delta_i = d_{i}^* d_{i} + d_{i-1} d_{i-1}^*$ denotes the higher Laplace operator in degree $i$.
\end{lemma}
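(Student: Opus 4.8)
The plan is to recognise both identities as the Hodge decomposition for a Hilbert complex with bounded differentials, and to derive them from just two ingredients: the positivity of the Laplacians $\Delta_i$, and the elementary orthogonality relation $C^i=\overline{\mathrm{im}\, d_{i-1}}\oplus \ker d_{i-1}^*$ valid for any bounded operator $d_{i-1}$. The starting point is to identify the harmonic space. Because $\Delta_i=d_i^*d_i+d_{i-1}d_{i-1}^*$ is a sum of two positive operators, for every $v\in C^i$ we have
\[
\langle \Delta_i v,v\rangle=\|d_i v\|^2+\|d_{i-1}^* v\|^2,
\]
so $\Delta_i v=0$ holds precisely when $d_iv=0$ and $d_{i-1}^*v=0$; that is, $\ker\Delta_i=\ker d_i\cap\ker d_{i-1}^*$.

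Next I would establish \eqref{direct sum: kerdi}. The cochain condition $d_id_{i-1}=0$ gives $\overline{\mathrm{im}\, d_{i-1}}\subseteq\ker d_i$, and the orthogonality relation applied to $d_{i-1}$ furnishes the orthogonal splitting $C^i=\overline{\mathrm{im}\, d_{i-1}}\oplus\ker d_{i-1}^*$. For $v\in\ker d_i$, write $v=v_1+v_2$ accordingly, with $v_2\in\overline{\mathrm{im}\, d_{i-1}}$ and $v_1\in\ker d_{i-1}^*$. Since $v_2\in\overline{\mathrm{im}\, d_{i-1}}\subseteq\ker d_i$ and $v\in\ker d_i$, we obtain $v_1=v-v_2\in\ker d_i$, whence $v_1\in\ker d_i\cap\ker d_{i-1}^*=\ker\Delta_i$. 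Orthogonality of the two summands then yields $\ker d_i=\overline{\mathrm{im}\, d_{i-1}}\oplus\ker\Delta_i$, with the convention $d_{-1}=0$ covering the case $i=0$.

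For the top-degree identity \eqref{direct sum: Cn+1}, I would use that the complex terminates at $C^{n+1}$, so $d_{n+1}=0$ and hence $\Delta_{n+1}=d_nd_n^*$. Positivity gives $\ker\Delta_{n+1}=\ker d_n^*$, and the same orthogonality relation applied to $d_n$ produces
\[
C^{n+1}=\overline{\mathrm{im}\, d_n}\oplus\ker d_n^*=\overline{\mathrm{im}\, d_n}\oplus\ker\Delta_{n+1}.
\]

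The argument is purely functional-analytic and I expect no serious obstacle; the only point deserving care is that the images $\mathrm{im}\, d_i$ need not be closed, so one must work throughout with the closures $\overline{\mathrm{im}\, d_i}$ and invoke the orthogonal-complement identities $(\ker d_{i-1}^*)^\perp=\overline{\mathrm{im}\, d_{i-1}}$ rather than assuming exactness at the level of the complex. This is precisely why the harmonic summand $\ker\Delta_i$, rather than a naive algebraic complement of $\mathrm{im}\, d_{i-1}$, is the correct object, and it matches the appearance of closures in the reduced cohomology $\overline{\mathrm H}^i$.
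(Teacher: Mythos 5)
Your proof is correct and follows essentially the same route as the paper's: both arguments rest on the orthogonal splitting $C^i=\overline{\mathrm{im}\,d_{i-1}}\oplus\ker d_{i-1}^*$ together with the identification $\ker\Delta_i=\ker d_i\cap\ker d_{i-1}^*$. The only difference is presentational: you get the latter directly from the positivity identity $\langle\Delta_i v,v\rangle=\|d_iv\|^2+\|d_{i-1}^*v\|^2$, whereas the paper derives it from the three-fold decomposition $C^i=C^i_-\oplus C^i_0\oplus C^i_+$ and the injectivity of the restricted Laplacians $\bar\Delta_i^\pm$ on $C^i_\pm$.
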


\begin{proof}  
    Let $i \leq n+1$. Since the codifferential satisfies $d_id_{i-1}=0$, we have 
    $$(\mathrm{ker}d_{i-1}^*)^\perp=\overline{\mathrm{im}d_{i-1}}\subset \mathrm{ker}d_{i},$$
    where $d_{i-1}^*$ is the adjoint operator of $d_{i-1}$.
    Hence
    $$
    \mathrm{ker}d_{i-1}^* + \mathrm{ker}d_{i}=C^i.
    $$
    Decompose $C^i$ into mutually orthogonal subspaces 
    $$C^i=C^i_-\oplus C^i_0\oplus C^i_+,$$
    where
    \begin{equation*}
        \begin{aligned}
            C^i_-&:=(\mathrm{ker}d_{i-1}^*)^\perp \cap \mathrm{ker}d_{i}=(\mathrm{ker}d_{i-1}^*)^\perp=\overline{\mathrm{im}d_{i-1}},\\
            C^i_0&:=\mathrm{ker}d_{i-1}^* \cap \mathrm{ker}d_{i},\\
            C^i_+&:=\mathrm{ker}d_{i-1}^* \cap (\mathrm{ker}d_{i})^\perp=(\mathrm{ker}d_{i})^\perp=\overline{\mathrm{im}d_{i}^*}.
        \end{aligned}
    \end{equation*}
    It is clear that 
    \begin{equation} \label{ker d_i}
    \mathrm{ker}d_{i}=C^i_-\oplus C^i_0,\quad \mathrm{ker}d_{i-1}^*=C^i_+\oplus C^i_0,
    \end{equation}
    and
    \begin{equation*}
    \overline{\mathrm{im}d_{i}}=C^{i+1}_-,\quad \overline{\mathrm{im}d_{i}^*}=C^{i}_+.
    \end{equation*}
    Further we may write the higher Laplace operator $\Delta_i$
    $$
    \Delta_i=\Delta_i^+ +\Delta_i^-,
    $$
    where 
    $$
    \Delta_i^+:=d_{i}^*d_{i}, \quad \Delta_i^-:=d_{i-1}d_{i-1}^*.
    $$
    Thus,
    \begin{align}
        \label{kerDelta}\mathrm{ker}\Delta_i^+=\mathrm{ker}d_{i}=C^i_-\oplus C^i_0,\quad \mathrm{ker}\Delta_i^-=\mathrm{ker}d_{i-1}^*=C^i_+\oplus C^i_0.
    \end{align}
    The higher Laplace operator $\Delta_i^+$ can be decomposed into 
    $$
    \Delta_i^+:C^i \twoheadrightarrow C^i_+ \stackrel{\bar{\Delta}_i^+}{\longrightarrow} C^i_+ \hookrightarrow C^i,
    $$
    and similarly, $\Delta_i^-$ can be decomposed into 
    $$
    \Delta_i^-:C^i \twoheadrightarrow C^i_- \stackrel{\bar{\Delta}_i^-}{\longrightarrow} C^i_- \hookrightarrow C^i,
    $$ 
    where $C^i \twoheadrightarrow C^i_{\pm}$ is the orthogonal projection, $C^i_{\pm} \hookrightarrow C^i$ is the inclusion, and $\bar{\Delta}_i^+$ and $\bar{\Delta}_i^-$ are both injective operators.
    Here the injectivity of $\bar{\Delta}_i^{\pm}$ follows from the equality (\ref{kerDelta}).
    Considering 
    $$
    \Delta_i=\bar{\Delta}_i^-\oplus 0\oplus \bar{\Delta}_i^+:C^i_-\oplus C^i_0\oplus C^i_+ \to C^i_-\oplus C^i_0\oplus C^i_+,
    $$
    we conclude that
    $$
    \mathrm{ker}\Delta_i=C^i_0.
    $$
    From the above discussion, by substituting the terms $C^i_-$ and $C^i_0$ in (\ref{ker d_i}) with $\overline{\mathrm{im}\, d_{i-1}}$ and $\ker\Delta_i$, respectively, the direct sum equality (\ref{direct sum: kerdi}) follows. The case of degree $n+1$ is similar, and that implies the equality (\ref{direct sum: Cn+1}).
\end{proof}

\begin{lemma}
  \label{split}
   Let $0\leq i\leq n+1$. The short exact sequence given in (\ref{Hseq3}) satisfies
   \begin{equation}
    \label{direct sum: Ci}
       C^i\cong \mathrm{ker}d_{i} \oplus \overline{\mathrm{im}d_{i}}.
   \end{equation}
   Moreover, together with Lemma \ref{lemma: kernel of Delta}, we obtain that for a cochain complex of Hilbert spaces and bounded linear operators \eqref{cochain complex}, there is an associated direct sum equality of vector spaces
   $$
   (\bigoplus_{i \ \mathrm{even}}{C^i}) \oplus (\bigoplus_{i\  \mathrm{odd}}{\mathrm{ker}\Delta_i}) \cong (\bigoplus_{i\  \mathrm{odd}}{C^i}) \oplus (\bigoplus_{i\  \mathrm{even}}{\mathrm{ker}\Delta_i}).
   $$  
\end{lemma}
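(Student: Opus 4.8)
The plan is to dispatch the two assertions in turn: the splitting \eqref{direct sum: Ci} is a purely Hilbert-space statement, while the concluding vector-space isomorphism is a reindexing argument layered on top of Lemma \ref{lemma: kernel of Delta}.

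For \eqref{direct sum: Ci} I would argue directly from the modified sequence \eqref{Hseq3}. There the map $\tilde d_i\colon C^i \to \overline{\mathrm{im}\,d_i}$ is the partial isometry arising from the polar decomposition $d_i=\tilde d_i|d_i|$, with $\ker\tilde d_i=\ker d_i$ and $\mathrm{im}\,\tilde d_i=\overline{\mathrm{im}\,d_i}$. Since $\ker d_i$ is closed, $C^i$ splits orthogonally as $C^i=\ker d_i\oplus(\ker d_i)^\perp$, and $\tilde d_i$ restricts to a unitary $(\ker d_i)^\perp\xrightarrow{\sim}\overline{\mathrm{im}\,d_i}$; this yields the isomorphism $C^i\cong\ker d_i\oplus\overline{\mathrm{im}\,d_i}$ asserted in \eqref{direct sum: Ci}. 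For $i=n+1$ there is no outgoing differential, so $\overline{\mathrm{im}\,d_{n+1}}=0$ and the formula degenerates to $C^{n+1}\cong C^{n+1}$, compatible with \eqref{direct sum: Cn+1}.

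I would then feed Lemma \ref{lemma: kernel of Delta} into \eqref{direct sum: Ci}. Adopting the conventions $\overline{\mathrm{im}\,d_{-1}}=\overline{\mathrm{im}\,d_{n+1}}=0$ (there is no differential entering $C^0$ nor leaving $C^{n+1}$), substituting \eqref{direct sum: kerdi} produces a single uniform three-term decomposition
\begin{equation*}
C^i\cong\overline{\mathrm{im}\,d_{i-1}}\oplus\ker\Delta_i\oplus\overline{\mathrm{im}\,d_i},\qquad 0\le i\le n+1,
\end{equation*}
the top-degree case ($i=n+1$) reproducing \eqref{direct sum: Cn+1}.

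Finally I would insert this decomposition into both sides of the desired equality and reindex. On the left, $\bigoplus_{i\,\mathrm{even}}C^i$ supplies the image summands $\overline{\mathrm{im}\,d_{i-1}}$ (whose index $i-1$ is odd) together with $\overline{\mathrm{im}\,d_i}$ (whose index $i$ is even), so that every $\overline{\mathrm{im}\,d_j}$, $0\le j\le n$, occurs exactly once; the kernels $\bigoplus_{i\,\mathrm{even}}\ker\Delta_i$ coming from the $C^i$, together with the extra $\bigoplus_{i\,\mathrm{odd}}\ker\Delta_i$, account for every $\ker\Delta_i$ exactly once. The same computation on the right, with the two parities exchanged, produces the identical total $\bigl(\bigoplus_i\ker\Delta_i\bigr)\oplus\bigl(\bigoplus_j\overline{\mathrm{im}\,d_j}\bigr)$, whence the two sides are isomorphic as vector spaces. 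The one point demanding care — and essentially the only way the count could fail — is the boundary bookkeeping: one must confirm that the shifted terms $\overline{\mathrm{im}\,d_{-1}}$ and $\overline{\mathrm{im}\,d_{n+1}}$ at the two ends genuinely vanish, so that the image subspaces cancel exactly between the even and odd contributions, with none double-counted or dropped.
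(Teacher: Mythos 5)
Your proof is correct and follows essentially the same route as the paper: the splitting of \eqref{Hseq3} via the partial isometry from the polar decomposition, followed by substitution of the decompositions of Lemma \ref{lemma: kernel of Delta} and a rearrangement of direct summands. The only cosmetic difference is that you reduce both sides to the common form $(\bigoplus_{i}\ker\Delta_i)\oplus(\bigoplus_{j=0}^{n}\overline{\mathrm{im}\,d_j})$ by a uniform reindexing valid for either parity of $n$, whereas the paper telescopes both sides to $\bigoplus_{i=0}^{n}\ker d_i\oplus C^{n+1}$ and treats the two parities separately.
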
 
\begin{proof}
   Let us recall that short exact sequences of Hilbert spaces always split. Let 
   $$
    0 \longrightarrow E \stackrel{\iota}{\longrightarrow} F \stackrel{p}{\longrightarrow} K \longrightarrow 0
   $$ 
   be an exact sequence of Hilbert spaces and bounded linear operators. We need to show there exists a bounded linear operator $j:K\to F$ such that $pj=\mathrm{id}_K$.
   Since $\iota(E)\subseteq F$ is a closed subspace, therefore there exists an orthogonal complement $S$ such that $F=S\oplus \iota(E)$. 
   Exactness of the sequence implies $\mathrm{ker}p\subseteq \iota(E)$, and  the direct sum decomposition implies $S\cap \iota(E)=0$. Hence $p|_{S}:S\to K$ is injective.
   On the other hand, since $p$ is surjective and $\iota(E)\subseteq \mathrm{ker}p$, we infer $p|_{S}$ is also surjective. Therefore, $p|_{S}:S\to K$ is a bijective bounded linear operator and hence there exists an inverse $j=(p|_S)^{-1}:K\to S\subset F$ such that $pj=\mathrm{id}_K$. 
   
  Using the splitting, we deduce the decomposition claimed in the statement.
   To show the second part of the statement, we appeal to the 
    direct sum decompositions in (\ref{direct sum: kerdi}), (\ref{direct sum: Cn+1}) and (\ref{direct sum: Ci}).
   We show it for the case where $n$ is odd. 
    \begin{equation*}
       \begin{aligned}
        \bigoplus_{i=0}^{\frac{n+1}{2}} C^{2i} \oplus \bigoplus_{i=0} ^{\frac{n-1}{2}}\ker\Delta_{2i+1}
        &\cong
         \bigoplus_{i=0}^{\frac{n-1}{2}}\left(C^{2i}\oplus\ker\Delta_{2i+1}\right)\oplus C^{n+1}\\
         &\cong 
         \bigoplus_{i=0}^{\frac{n-1}{2}} \left( \ker d_{2i}\oplus\overline{\text{im} \, d_{2i}} \oplus \ker\Delta_{2i+1}\right)\oplus C^{n+1}\\
         &\cong
         \bigoplus_{i=0}^{\frac{n-1}{2}}\left(\ker d_{2i} \oplus \ker d_{2i+1}\right)\oplus C^{n+1}
         \\
         &\cong
         \bigoplus_{i=0}^{{n}}\ker d_{i} \oplus C^{n+1},      
      \end{aligned}
    \end{equation*}
 and           
   \begin{equation*}
       \begin{aligned}
            \bigoplus_{i=0}^{\frac{n-1}{2}} C^{2i+1} \oplus \bigoplus_{i=0} ^{\frac{n+1}{2}}\ker\Delta_{2i}
             &\cong
            \bigoplus_{i=0}^{\frac{n-1}{2}} \left(\ker\Delta_{2i} \oplus C^{2i+1}\right) \oplus \ker\Delta_{n+1}\\
            &\cong 
            \left(\ker d_0 \oplus \ker d_1 \oplus \overline{\text{im} \, d_1}\right)
            \oplus\bigoplus_{i=1}^{\frac{n-1}{2}}\left(\ker \Delta_{2i}\oplus \ker d_{2i+1} \oplus \overline{\text{im}\, d_{2i+1}}\right)\oplus \ker\Delta_{n+1}\\
            &\cong
            \ker d_0 \oplus \ker d_1 \bigoplus_{i=1}^{\frac{n-1}{2}} \left( \left(\overline{\text{im} \, d_{2i-1}}\oplus \ker\Delta_{2i}\right) \oplus \ker d_{2i+1} \right) \oplus \overline{\text{im} \, d_{n}} \oplus \ker\Delta_{n+1}\\
            &\cong
            \ker d_0 \oplus \ker d_1 \bigoplus_{i=1}^{\frac{n-1}{2}} \left( \ker d_{2i} \oplus \ker d_{2i+1} \right) \oplus \overline{\text{im} \, d_{n}} \oplus \ker\Delta_{n+1}\\
            &\cong
            \bigoplus_{i=0}^{n}\ker d_i\oplus 
            \left (\overline{\text{im} \, d_{n}} \oplus \ker\Delta_{n+1}\right)
            \\
            &\cong
            \bigoplus_{i=0}^{{n}}\ker d_{i} \oplus C^{n+1}.
        \end{aligned}
    \end{equation*}
 The case where $n$ is even is similar.    
\end{proof}

   \begin{proposition}
   \label{lem: step 2}
       Let $G$ be a discrete group and let $X$ be a $G$-finite model for $\underline E G$ of dimension $n+1$.       
       For each degree $j$, assume that $\Delta_j$ has a spectral gap at $0$ when its kernel is non-zero.
       Then we have the following identity in K-theory.
   $$
    \sum_{i=0}^{n+1}\sum_{k\in \lambda_i}(-1)^i[\rho_{H_{i_k}}]=\sum_{j=0}^{n+1}(-1)^j[p_j]\in \mathrm{K_0(C^*_{red}}(G)),
   $$
   where $\rho_{H_{i_k}}$ is the averaging projection associated with $H_{i_k}$, and $p_j$ is the higher Kazhdan projection in degree $j$.
\end{proposition}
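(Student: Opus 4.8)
The plan is to convert the Hilbert-space isomorphism furnished by Lemma \ref{split} into an equality in $\mathrm{K}_0(\Csr(G))$ by reading off the $\mathrm{K}$-theory class of each module that appears in it. Granting for the moment that
\[
\Bigl(\bigoplus_{i\ \mathrm{even}} C^i\Bigr) \oplus \Bigl(\bigoplus_{i\ \mathrm{odd}}\ker\Delta_i\Bigr) \;\cong\; \Bigl(\bigoplus_{i\ \mathrm{odd}} C^i\Bigr) \oplus \Bigl(\bigoplus_{i\ \mathrm{even}}\ker\Delta_i\Bigr)
\]
is an isomorphism of \emph{finitely generated projective Hilbert $\Csr(G)$-modules}, additivity of $\mathrm{K}_0$ gives $\sum_{i\ \mathrm{even}}[C^i]+\sum_{i\ \mathrm{odd}}[\ker\Delta_i]=\sum_{i\ \mathrm{odd}}[C^i]+\sum_{i\ \mathrm{even}}[\ker\Delta_i]$, which rearranges to the telescoped identity $\sum_{i=0}^{n+1}(-1)^i[C^i]=\sum_{i=0}^{n+1}(-1)^i[\ker\Delta_i]$. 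The proposition then follows once each side is identified with the claimed sum.

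The two identifications are the easy half. For the terms $\ker\Delta_i$, the operator $\Delta_i$ is $\Csr(G)$-linear and Definition \ref{def: higher kazhdan gen} realises $\ker\Delta_i$ as the range of the higher Kazhdan projection $p_i$; the spectral-gap hypothesis ensures that, whenever $\ker\Delta_i\neq 0$, the spectral projection onto $\ker\Delta_i$ is a norm limit of continuous functions of $\Delta_i$ and hence lies in $\mathrm{M}_{k_i}(\Csr(G))$, so $[\ker\Delta_i]=[p_i]$, while if $\ker\Delta_i=0$ both sides vanish. For the terms $C^i$, Lemma \ref{step1} gives $C^i\cong\bigoplus_{k\in\lambda_i}l^2(G/H_{i_k})$ as Hilbert $\Csr(G)$-modules, and each $l^2(G/H)$ is isomorphic to the range of the averaging projection $\rho_H$ acting on $l^2(G)$, so $[C^i]=\sum_{k\in\lambda_i}[\rho_{H_{i_k}}]$. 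Substituting these computations turns the telescoped identity into precisely $\sum_{i=0}^{n+1}\sum_{k\in\lambda_i}(-1)^i[\rho_{H_{i_k}}]=\sum_{j=0}^{n+1}(-1)^j[p_j]$.

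The substantive point, and the step I expect to be the main obstacle, is justifying that the isomorphism of Lemma \ref{split} is genuinely one of $\Csr(G)$-modules realised \emph{inside the reduced $C^*$-algebra}, rather than a mere isomorphism of abstract Hilbert spaces or of modules over the ambient von Neumann completion. I would trace through its construction: it is built from the Hodge decompositions $\ker d_i=\overline{\mathrm{im}\,d_{i-1}}\oplus\ker\Delta_i$ and $C^{n+1}=\overline{\mathrm{im}\,d_n}\oplus\ker\Delta_{n+1}$ of Lemma \ref{lemma: kernel of Delta}, together with the splittings $C^i\cong\ker d_i\oplus\overline{\mathrm{im}\,d_i}$ coming from the polar decomposition $d_i=\tilde d_i\,|d_i|$. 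Every map in sight ($d_i$, $d_i^*$, and the orthogonal projections onto $\ker d_i$, $\overline{\mathrm{im}\,d_i}$ and $\ker\Delta_i$) is $\Csr(G)$-linear, so the only issue is membership in $\mathrm{M}_{\bullet}(\Csr(G))$. This is exactly where the spectral gap enters: a gap at $0$ makes $0$ isolated in the spectrum, so continuous functional calculus applied to $\Delta_i^{+}=d_i^*d_i$ places both the Hodge projections and the partial isometry $\tilde d_i$ (namely $d_i\,g(\Delta_i^{+})$ with $g(t)=t^{-1/2}$ away from $0$ and $g(0)=0$) in $\Csr(G)$; since the intermediate summands $\overline{\mathrm{im}\,d_i}$ cancel telescopically, only these finitely many projections need to be $C^*$-algebraic. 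Once this is secured, every module in the displayed isomorphism is finitely generated projective over $\Csr(G)$ and the isomorphism is $\Csr(G)$-linear, so the additivity argument of the first paragraph applies and completes the proof.
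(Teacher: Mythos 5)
Your proposal is correct and follows essentially the same route as the paper: both convert the Hilbert-space isomorphism of Lemma \ref{split} into a K-theory identity by checking that the implementing operator lies in matrices over $\Csr(G)$, using the spectral gap and continuous functional calculus (with $f(t)=t^{-1/2}$ away from $0$) to place the Hodge projections and the partial isometries $\tilde d_j = d_j f(d_j^*d_j)$ in $\mathrm{M}_\bullet(\Csr(G))$. The only cosmetic difference is that the paper phrases the conclusion as conjugation of a direct sum of projections by the operator $S$ and its inverse, whereas you phrase it as additivity of $\mathrm{K}_0$ on isomorphic finitely generated projective modules; these are equivalent.
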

\begin{proof}
   Lemma \ref{split} together with Lemma \ref{step1}, applied to the cochain complex
   $$
     ( \mathrm{Hom}_{\mathbb{C}G}(\mathbb{C}[SX^{\bullet}],l^2(G)), d^{\bullet} )
    \cong ( \bigoplus_{k\in \lambda_{\bullet}}l^2(G/H_{i_k}), d^{\bullet} ),
   $$ 
   imply  
   \begin{equation*}
   \label{direct sum}
      \left(\bigoplus_{i \ \mathrm{even}}\bigoplus_{k\in \lambda_i}l^2(G/H_{i_k}) \right)\oplus \left(\bigoplus_{j\  \mathrm{odd}}{\mathrm{ker}(\Delta_j)}\right) \cong \left(\bigoplus_{i\  \mathrm{odd}}\bigoplus_{k\in \lambda_{i}}l^2(G/H_{i_k}) \right)\oplus \left(\bigoplus_{j\  \mathrm{even}}{\mathrm{ker}(\Delta_j)}\right).
   \end{equation*}
   Denote by $S$ the map implementing the above isomorphism.
   As before denote by  $\rho_{H_{i_k}}$ the projection onto $l^2(G/H_{i_k})$ and by  $p_j$ the projection onto $\mathrm{ker}(\Delta_j)$ (i.e. higher Kazhdan projections). 
   Hence we obtain the following direct sum equality of projections:
   \begin{equation}
   \label{direct sum of proj}
       \left(\bigoplus_{i \ \mathrm{even}}\bigoplus_{k\in \lambda_i} \rho_{H_{i_k}}\right)\oplus \left(\bigoplus_{i\  \mathrm{odd}}{p_j}\right) = S^{-1}\left(\left(\bigoplus_{i\  \mathrm{odd}}\bigoplus_{k\in \lambda_{i}}\rho_{H_{i_k}}\right) \oplus \left(\bigoplus_{j\  \mathrm{even}} {p_j}\right)\right)S.
   \end{equation}
   Clearly $\rho_{H_{i_k}}$ lies in $\Csr(G)$ for each $k\in \lambda_i$, $i\in \{0,\cdots, n+1\}$, and by our assumption $p_j$ belongs to $\Csr(G)^{\oplus k_j}$ for some $k_j$ for each $j$.
   
    We claim that $S$ and $S^{-1}$ belong to some matrix algebra of $\Csr(G)$. 
    This together with the equality (\ref{direct sum of proj}) will then imply the desired $K$-theory equality as stated in the statement.
   Let us now prove our claim concerning $S$ and its inverse.    
   As seen in the proof of Lemma \ref{split}, $S$ is obtained from a
   composition of isomorphisms given in (\ref{direct sum: kerdi}), (\ref{direct sum: Cn+1}) and (\ref{direct sum: Ci}).   
    Identifications in (\ref{direct sum: kerdi}) and (\ref{direct sum: Cn+1}) are clear, and the one in (\ref{direct sum: Ci}) is obtained from the decomposition
    $\mathrm{proj}|_{\mathrm{ker}d_j}\oplus \tilde{d_j}(1-\mathrm{proj}|_{\mathrm{ker}d_j})$,
    where $\mathrm{proj}|_{\mathrm{ker}d_j}$ denotes the orthogonal projection onto $\mathrm{ker}d_j$, and 
    $\tilde{d_j}$ is a partial isometry from the exact sequence (\ref{Hseq3}).
    The restriction $\tilde{d_j}|_{{\mathrm{ker}d_j}^{\perp}}$ to the orthogonal complement of $\mathrm{ker}d_j$ coincides with $\tilde{d_j}(1-\mathrm{proj}|_{\mathrm{ker}d_j})$. 
    Hence, to prove the claim, it suffices to show that both $\mathrm{proj}|_{\mathrm{ker}d_j}\oplus \tilde{d_j}(1-\mathrm{proj}|_{\mathrm{ker}d_j})$ and its inverse belong to some matrix algebra over $\mathrm{C^*_{red}}(G)$ for each $j$.    
    We show first that $\mathrm{proj}|_{\mathrm{ker}d_j}$ belongs to 
    $\mathrm M _{k_j} (\Csr(G))$ for some $k_j$.    
    By assumption, $\Delta_j$ has a spectral gap at 0, so does $d_j^*d_j$. 
    Hence, the orthogonal projection onto the kernel of $d_j^*d_j$, which is 
    $\mathrm{proj}|_{\mathrm{ker}d_j}$, belongs to $\mathrm M _{k_j} (\Csr(G))$ for some $k_j$.
    Next we show $\tilde{d_j}$ as well lies in some matrix algebra over  $\Csr(G)$.    
    In this case, since $d_j$ lies in the matrix algebra of $\mathbb CG$, so does $d_j^*d_j$.     
    Suppose $d_j\in \mathrm M_{k_{j+1}\times k_j} (\mathbb CG)$ for some $k_{j+1}$.     
    We apply the continuous functional calculus to the normal element $d_j^*d_j\in \mathrm M _{k_j} (\mathbb CG)$ in the $\Cs$-algebra $\mathrm M_{k_n} (\Csr(G))$.    
    Let $f$ be a function on the spectrum of $d_j^*d_j$ such that 
    $f(t)=t^{-\frac{1}{2}}$ when $t\neq0$ and vanishes when $t=0$.     
    Since $d_j^*d_j$ has a spectral gap at 0, $f$ is a continuous function.    
    Thus $f(d_j^*d_j)$ belongs to $\mathrm M _{k_j} (\Csr(G))$.    
    The fact that $d_i\in \mathrm M _{k_{j+1}\times k_j} (\mathbb CG) \subset \mathrm M _{k_{j+1}\times k_j}(\Csr(G))$, together with $\tilde{d_j}=d_jf(d_j^*d_j)$ imply that 
    $\tilde{d_j}$ also belongs to 
    $\mathrm M _{k_{j+1}\times k_j}(\Csr(G))$.
    Therefore, the isomorphism in (\ref{direct sum: Ci}), given by the operator $\mathrm{proj}|_{\mathrm{ker}d_j}\oplus \tilde{d_j}(1-\mathrm{proj}|_{\mathrm{ker}d_j})$, lies in some matrix algebra over 
    $\Csr(G)$. The same argument can be applied to $S^{-1}$ to show that it belongs to  some matrix algebra over 
    $\Csr(G)$. In K-theory we infer 
    $$
     \sum_{i \,\mathrm{even}}\sum_{k \in \lambda_i}[\rho _{H_{i_k}}]+\sum_{j\, \mathrm{odd}}[p_j] 
     =
     \sum_{i \,\mathrm{odd}}\sum_{k \in \lambda_i}[\rho_{H_{i_k}}]+\sum_{j \,\mathrm{even}}[p_j] \in \mathrm K_0(\Csr(G)).$$ 
     This finishes the proof.
\end{proof}

   We have now all ingredients to prove our main theorem. 
   The main technical step is to identify the K-theory class of the higher Kazhdan projection with an alternating sum of averaging projections corresponding to the isotropy subgroups arising from the action of $G$ on an appropriate simplicial complex. This is done in Proposition \ref{lem: step 2}.
   
\begin{proof} [Proof of Theorem \ref{main thm1}]
   It suffices to prove that the equality \eqref{p1} holds. The equality \eqref{p2} is then a direct consequence of Lemma \ref{lemma: Euler} based on discussion on page 875 of \cite{EM}.
Consider the finite length cochain complex associated with the simplex $SX$ as before
 \begin{align*} 
		0 \longrightarrow \bC[SX^{n+1}] \stackrel{\partial_{n}}{\longrightarrow} \cdots \stackrel{\partial_1}{\longrightarrow} \bC[SX^1] \stackrel{\partial_0}{\longrightarrow} \bC[SX^0] \stackrel{\epsilon}{\longrightarrow} \bC \to 0,
	\end{align*}
   where $\partial_{0},\cdots,\partial_{n}$ denote the boundary map and $\epsilon$ denotes the augmentation map. After applying the functor $\mathrm {Hom}_{\mathbb C G}(\cdot, \ell^2(G))$ to this sequence, and using the identification from Lemma \ref{step1}
   $$
   \mathrm{Hom}_{\mathbb{C}G}(\mathbb{C}[SX^{i}],l^2(G))\cong \bigoplus_{k\in \lambda_i}l^2(G/H_{i_k}),
   $$
   where $\lambda_i$ is a finite index set associated with $\mathbb{C}[SX^{i}]$,
   we obtain
   $$
    0 \ra \bigoplus_{k\in \lambda_1}l^2(G/H_{1_k}) \ra \bigoplus_{k\in \lambda_2}l^2(G/H_{2_k}) \ra \cdots \ra \bigoplus_{k\in \lambda_{n+1}}l^2(G/H_{{n+1}_k}) \ra 0.
   $$
   Denote by $\rho_{H_{i_k}}$ the projection associated with the finite isotropy group $H_{i_k}$. Higher Kazhdan projection $p_n$ is the projection associated with $\ker \Delta_{n}$. Proposition \ref{lem: step 2}
   identifies the alternating sum involving the K-theory class of $\rho_{H_{i_k}}$'s and the one of $p_n$ up to a sign, i.e.
   \begin{equation*}
        \label{step2}
        (-1)^n[p_{n}]=\sum_{i=0}^{n+1}\sum_{k\in \lambda_i}(-1)^i[\rho_{H_{i_k}}] \in \mathrm K_0(\Csr(G)).
    \end{equation*}
 Equation (\ref{alternating sum}) from Proposition \ref{proj resolution} then implies that
 \begin{equation*}
        \sum_{i=0}^{n+1}\sum_{k\in \lambda_i}(-1)^i[\rho_{H_{i_k}}]
        = \sum_{\sigma\in G \backslash SX}{{(-1)}^{|\sigma|}[\rho_{\sigma}]}  
    \end{equation*}      
Therefore 
   $$
(-1)^n[p_n]= \sum_{\sigma\in G \backslash SX}{{(-1)}^{|\sigma|}[\rho_{\sigma}]}.  
$$
This finishes the proof.   
\end{proof}

\begin{example}
There are groups with exactly one non-vanishing higher Kazhdan projection. For instance, virtually free groups have only one non-zero higher Kazhdan projection in degree one (see Lemma \ref{lem:existence}).
Taking the Cartesian product of a finite group with $n$ times copies of the free group $\mathbb F_2$ give rise to groups with having the only non zero higher Kazhdan projection in degree $n$. This example was discussed in \cite{Pooya-Wang}. More examples  come from groups which have cohomology vanishing-below-the-rank. For such groups the Steinberg representation plays a special role. This representation witnesses the non-vanishing of the cohomology in the top degree.
Consider $\Gamma \leq SL(n+1, \mathbb Q_p)$  or in $PGL(n+1, \mathbb Q_p)$ for large prime $p$ and $n>1$. Then all cohomology groups with unitary representation as coefficient vanish for degrees below $n+1$ and cohomology in degree $n+1$ is reduced. (\cite{Garland73}, \cite{dymarajanuszkiewicz2002}).
Therefore, a higher Kazhdan projection $p_{n}$ exists, and it is the only one.
\end{example}

We end this section with the following remark.

\begin{remark}
When a group \( G \) of type $FP_{n+1}$ admits more than one non-zero higher Kazhdan projection, then 
the combinatorial Euler class $[\mathrm{Eul}^{\mathrm{cmb}}_X]$ associated with $X$ is the preimage of the alternating sum of higher Kazhdan projections for $G$ under the assembly map $\mu_G$, i.e., 
    \begin{equation*}
        \mu_0^G([\mathrm{Eul}^{\mathrm{cmb}}_X])
        =
        \sum_{i=1}^{n}(-1)^i[p_i].
    \end{equation*}
Indeed the proof follows the exact same steps as in Theorem \ref{main thm1}. As our current interest is to explicitly describe the K-theory class of higher Kazhdan projections, and to identify their pre-image under the Baum-Connes assembly map we choose to state our main theorem with the assumption that there is only one non-zero higher Kazhdan projection. Moreover, we have a concrete class of groups (non-amenable finitely generated virtually free groups) satisfying the condition of the theorem. While at the moment we are not aware of concerete groups with more than one non-zero higher Kazhdan projections.
\end{remark} 

\section{Delocalised $\ell^2$-Betti numbers}
\label{sec: Examples}
 Higher Kazhdan projections are tightly related to delocalised $\ell^2$-Betti numbers. In this section we provide calculations for hyperbolic groups with a finite dimensional model for $\underline EG$. In the next section we provide more concrete calculations of this invariant for the class of virtually free groups.

In \cite{Pooya-Wang} the first and third author introduced the notion of delocalised $\ell^2$-Betti numbers for hyperbolic groups or more generally for groups for which there is a smooth subalgebra to which delocalised traces extend. 
Here we recall this definition and some related notions.
Let $G$ be a discrete group and $g \in G$. The delocalised trace $\tau_{\langle{g}\rangle}$ on $\ell^1 (G)$ is the bounded linear map with tracial property 
\[
\tau_{\langle{g}\rangle} \colon \ell^1(G) \ra \mathbb C, \qquad \tau_{\langle{g}\rangle}(f) = \sum _{h\in \langle{g}\rangle} f(h),
\]
where $\langle{g}\rangle$ denotes the conjugacy class of $g$.

Smooth subalgebra $\mathcal S$ of a $\Cs$-algebra is a dense Banach subalgebra which is closed under functional calculus. They enjoy an important feature namely they have the same K-theory as the $\Cs$-algebra.  This in particular gives rise to a K-theory pairing by considering 
\[
    \tau_{\langle{g}\rangle} \colon \mathrm K_0(\Csr(G)) \cong \mathrm K_0(\mathcal S) \ra \mathbb C.
\]
Puschnigg in \cite{puschnigg2010} showed that hyperbolic groups have a smooth subalgebra of $\Csr(G)$ to which delocalised traces extend continuously. 

\begin{definition} \cite[Definition 2.11]{Pooya-Wang}
Let $G$ be a discrete group of type $FP_{n+1}$. Assume that there is a smooth subalgebra $\mathcal S \subseteq  \Csr(G)$ to with the delocalised trace $\tau_g$ extends. Assume further that the K-theory class $[p_n]$ lies in $\mathrm K_0(\Csr (G))$. The n-th delocalised $\ell^2$-Betti number of $G$ is 
     \[
        \beta _{n, \langle{g}\rangle}^{(2)} (G) = \tau_{\langle{g}\rangle} ([p_n])
     \]
     \end{definition}

Taking the perspective of operator algebras one may define $\ell^2$-Betti numbers through the K-theory pairing 
\[\beta_n^{(2)} (G) = \tau([p_n]),\] 
where $\tau$ is the von Neumann trace. This trace is defined by picking the coefficient of $\ell^1(G)$- functions in front of the neutral element of the group. Delocalised $\ell^2$-Betti numbers are then in complete analogy with the $\ell^2$-Betti numbers. 

Let us define
$$\mathcal {F}_G :=  \left\langle \left\{ \frac{1}{|F|} \;\middle|\; F \leq G \text{ finite} \right\}\right\rangle \leq \mathrm Q$$ the additive subgroup of $\mathbb Q$ generated by the inverses of the orders of finite subgroups of $G$.

Our definition of delocalised $\ell^2$-Betti numbers relies on the group being hyperbolic, as discussed above. Therefore, we retain this assumption in the calculations to come. Furthermore, because hyperbolic groups are known to satisfy the Baum-Connes conjecture \cite{lafforgue2009}, the assumptions in the next result reflect this context and differ slightly from those in the previous section.

\begin{corollary} \label{cor: nonzero del betti general}
    Let $G$ be a discrete hyperbolic group with only one higher Kazhdan projection $p_n$. Assume there is a $G$-finite model for $\underline E G$ of dimension $n+1$. Let $g\in G$.
    Then the  n-th delocalised $\ell^2$-Betti number of $G$ satisfies
    \[
       \beta^{(2)}_{n, \langle{g}\rangle} (G) \in 
        \mathcal {F}_G \subseteq \mathbb{Q}.
       \]   
\end{corollary}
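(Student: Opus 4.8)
The plan is to combine the explicit K-theoretic formula of Theorem \ref{main thm1} with a direct computation of the delocalised trace on each averaging projection. First I would observe that the hypotheses of the corollary---a hyperbolic group $G$ with a single non-vanishing higher Kazhdan projection $p_n$ and a $G$-finite model for $\underline E G$ of dimension $n+1$---are precisely those of Theorem \ref{main thm1}, with hyperbolicity supplying the Baum--Connes isomorphism via \cite{lafforgue2009} and, through Puschnigg's theorem \cite{puschnigg2010}, a smooth subalgebra $\mathcal S \subseteq \Csr(G)$ to which $\tau_{\langle{g}\rangle}$ extends continuously. Applying Theorem \ref{main thm1} then gives
\[
(-1)^n[p_n]=\sum_{\sigma\in G\backslash SX}(-1)^{|\sigma|}[\rho_{\sigma}]\in \mathrm K_0(\Csr(G)),
\]
a finite sum because $X$ is $G$-finite.

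Next I would push this identity through the K-theory pairing. Since $\mathcal S$ is a smooth subalgebra it shares the K-theory of $\Csr(G)$, so $\tau_{\langle{g}\rangle}$ induces an additive homomorphism $\tau_{\langle{g}\rangle}\colon \mathrm K_0(\Csr(G))\to\mathbb C$, and for each projection $\rho_{\sigma}\in\mathbb C G\subseteq\mathcal S$ the pairing $\tau_{\langle{g}\rangle}([\rho_{\sigma}])$ coincides with the scalar $\tau_{\langle{g}\rangle}(\rho_{\sigma})$. Evaluating $\tau_{\langle{g}\rangle}$ on both sides of the displayed identity therefore yields
\[
\beta^{(2)}_{n,\langle{g}\rangle}(G)=\tau_{\langle{g}\rangle}([p_n])=(-1)^n\sum_{\sigma\in G\backslash SX}(-1)^{|\sigma|}\,\tau_{\langle{g}\rangle}(\rho_{\sigma}).
\]

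The heart of the argument is the evaluation of each summand. Writing $\rho_{\sigma}=\frac{1}{|G_{\sigma}|}\sum_{h\in G_{\sigma}}h$ and using that $\tau_{\langle{g}\rangle}(h)=\sum_{k\in\langle{g}\rangle}\delta_h(k)$ equals $1$ when $h\in\langle{g}\rangle$ and $0$ otherwise, I obtain
\[
\tau_{\langle{g}\rangle}(\rho_{\sigma})=\frac{1}{|G_{\sigma}|}\,\bigl|\,G_{\sigma}\cap\langle{g}\rangle\,\bigr|.
\]
Because $G_{\sigma}$ is a finite subgroup of $G$, this is the integer $|G_{\sigma}\cap\langle{g}\rangle|$ times $\tfrac{1}{|G_{\sigma}|}$, and hence lies in $\mathcal F_G$, which by definition is the additive subgroup of $\mathbb Q$ generated by the inverses of orders of finite subgroups and so is closed under integer multiples.

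Finally, each term $(-1)^{|\sigma|}\tau_{\langle{g}\rangle}(\rho_{\sigma})$ belongs to $\mathcal F_G$, and the index set $G\backslash SX$ is finite; since $\mathcal F_G$ is a subgroup of $\mathbb Q$, the alternating sum again lies in $\mathcal F_G$, giving $\beta^{(2)}_{n,\langle{g}\rangle}(G)\in\mathcal F_G$. I expect the only genuinely delicate point to be the passage from projections to the K-theory pairing: one must invoke the smooth-subalgebra machinery to ensure that $\tau_{\langle{g}\rangle}$ descends to a well-defined additive functional on $\mathrm K_0(\Csr(G))$ and that its value on $[\rho_{\sigma}]$ is computed by the naive trace of the group-ring projection. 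The remaining combinatorial evaluation is then routine.
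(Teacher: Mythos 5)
Your proposal is correct and follows essentially the same route as the paper: the paper's proof likewise invokes the K-theoretic identity $(-1)^n[p_n]=\sum_{i,k}(-1)^i[\rho_{H_{i_k}}]$ (via Proposition \ref{lem: step 2}, which is the substance of Theorem \ref{main thm1}) and then pairs with $\tau_{\langle g\rangle}$. Your explicit evaluation $\tau_{\langle g\rangle}(\rho_{\sigma})=\tfrac{1}{|G_{\sigma}|}\,|G_{\sigma}\cap\langle g\rangle|$ and the closure of $\mathcal F_G$ under integer multiples and finite sums merely spell out details the paper leaves implicit.
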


\begin{proof}
    Due to Proposition \ref{lem: step 2} we have 
    \begin{equation*}
        (-1)^n[p_n]=\sum_{i=0}^{n+1}\sum_{k\in \lambda_i}(-1)^i[\rho_{H_{i_k}}] \in \mathrm{K_0(C^*_{red}}(G)),
    \end{equation*}
 where $\lambda_i$ denotes the number of $i$-simplices of a suitable simplicial model of $\underline EG$, and $\rho_{H_{i_k}}$ is the averaging projection associated to the finite isotropy group $H_{i_k}$. Now pairing with the delocalised traces $\tau_{g}$ for $g\in G$ yields the rational numbers appearing as an alternating sum of the inverse of the orders of specific finite subgroups. When $g\in G$ has an infinite order, all delocalised $\ell^2$-Betti numbers vanish.
\end{proof}

In the next section, we specify conditions under which delocalised $\ell^2$-Betti numbers are non-vanishing.  See Corollary \ref{cor: nonzero del betti}.
\section{Application: Virtually free groups}
\label{sec: Applications}
In this section, we apply Theorem \ref{main thm1} to the class of non-amenable finitely generated virtually free groups. These groups have exactly one non-zero higher Kazhdan projection. As we will see the standard tree on which they act provides a desired $G$-simplicial complex.
Using this setting we will be able to provide a concrete description for the K-theory class of their higher Kazhdan projection. 

We start by recalling some facts about virtually free groups.




\begin{lemma} (see \cite[Section 5]{Serre} and \cite[Theorem 1]{KarrasPietrowskiSolitar1973})
\label{lemma: f.g. virtually free}
    Let $G$ be a discrete group. The following are equivalent:
    \begin{enumerate}
    \item $G$ is a finitely generated virtually free group,
         \item $G$ is a virtually free group and there exists a tree $X$ with a proper and cocompact action of $G$,
     \item $G$ is an HNN extension of the form 
    \begin{equation}
     \label{equa: HNN}
         \langle t_1,\cdots,t_l,K|\ \text{relations in}\ K, t_1A_1{t_1}^{-1}=B_1,\cdots, t_lA_l{t_l}^{-1}=B_l \rangle,
     \end{equation}
     where $K$ is a tree product of a finite number of finite groups (the vertices of $K$), and each $A_i$, $B_i$ is a subgroup of a vertex of $K$. Here the choices of $K$ and subgroups $A_i$ and $B_i$ are unique up to group conjugacy.
    \end{enumerate}
\end{lemma}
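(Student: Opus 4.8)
The plan is to prove the three equivalences through Bass--Serre theory, treating (ii) $\Leftrightarrow$ (iii) as the structural heart and handling the passage to and from (i) separately. Throughout I would identify a proper cocompact simplicial action of $G$ on a tree $X$ (after a barycentric subdivision, so that the action is without inversions) with a decomposition of $G$ as the fundamental group $\pi_1(\mathcal{G},Y)$ of a graph of groups over the finite quotient graph $Y=G\backslash X$, whose vertex and edge groups are the $G$-stabilisers of the corresponding cells of $X$.

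For (ii) $\Leftrightarrow$ (iii), I would invoke the fundamental theorem of Bass--Serre theory: an action of $G$ on a tree without inversions corresponds precisely to an isomorphism $G\cong\pi_1(\mathcal{G},Y)$. Cocompactness of the action translates into finiteness of the graph $Y$, and properness into finiteness of all the vertex and edge stabilisers. Choosing a maximal spanning tree $T\subseteq Y$, the edges lying in $T$ assemble the finite vertex groups into the tree product $K$ by iterated amalgamation over the (finite) edge groups, while each of the finitely many remaining edges contributes one stable letter $t_i$ together with the relation $t_iA_it_i^{-1}=B_i$ identifying the two embeddings $A_i,B_i$ of its edge group. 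This recovers exactly the presentation in \eqref{equa: HNN}, so the two conditions are equivalent. The uniqueness of $K$ and of the subgroups $A_i,B_i$ up to conjugacy would then follow from the essential uniqueness (rigidity) of the reduced graph-of-finite-groups decomposition.

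For (ii)/(iii) $\Rightarrow$ (i), finite generation is immediate, since $G$ is generated by the finitely many finite vertex groups of $K$ together with $t_1,\dots,t_l$. For virtual freeness I would produce a finite-index free subgroup: because $G\backslash X$ is finite there are only finitely many conjugacy classes of finite subgroups of $G$, and a standard argument yields a finite-index normal torsion-free subgroup $F\trianglelefteq G$. Such an $F$ meets every stabiliser trivially, hence acts freely on $X$, and a group acting freely on a tree is free; thus $G$ is virtually free. For the reverse implication (i) $\Rightarrow$ (ii), starting from a finite-index free subgroup $F\leq G$ (which is finitely generated since $G$ is and $[G:F]<\infty$), the group $G$ has infinitely many ends unless it is finite or two-ended, so by Stallings' theorem on ends it splits nontrivially over a finite subgroup as an amalgam or HNN extension; Dunwoody's accessibility theorem guarantees that iterating this splitting terminates, producing a finite graph-of-finite-groups decomposition, equivalently a proper cocompact action on the associated Bass--Serre tree.

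I expect the direction (i) $\Rightarrow$ (ii) to be the main obstacle: converting abstract virtual freeness into an explicit tree action with finite stabilisers is not formal and rests on the two nontrivial inputs of Stallings' ends theorem and Dunwoody accessibility (equivalently, on the more combinatorial argument of \cite{KarrasPietrowskiSolitar1973}) rather than on elementary manipulation. A secondary subtlety lies in the construction of the torsion-free finite-index subgroup in (ii) $\Rightarrow$ (i), where one must ensure the subgroup avoids all conjugates of the finitely many finite subgroups simultaneously; this is exactly the point handled in \cite[Section 5]{Serre}.
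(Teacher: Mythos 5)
The paper does not actually prove this lemma: it is quoted directly from the literature, with the two implications attributed to \cite[Section 5]{Serre} (the Bass--Serre/graph-of-groups direction and the construction of a torsion-free finite-index subgroup) and to \cite[Theorem 1]{KarrasPietrowskiSolitar1973} (the structure theorem giving the HNN presentation from virtual freeness). Your outline is essentially the standard argument underlying exactly those citations --- Bass--Serre theory for (ii)~$\Leftrightarrow$~(iii), a finite quotient injective on the finitely many conjugacy classes of finite subgroups for (ii)~$\Rightarrow$~(i), and Stallings' ends theorem plus accessibility (which for virtually free groups follows already from the bound $|H|\leq [G:F]$ on finite subgroups) for (i)~$\Rightarrow$~(ii) --- so it is correct and takes the same route as the sources the paper relies on; you also correctly identify the two genuinely nontrivial inputs.
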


\begin{lemma} \cite[Lemma 4.1]{Pooya-Wang} \label{lem:existence}
		Let $G$ be a non-amenable virtually free group. Then $p_1$ lies in matrices over $\Csr(G)$, its K-class belongs to
		$\mathrm{K_0}(\Csr(G))$ and it is non-zero. All other $p_n$'s vanish when $n\neq 1$.
	\end{lemma}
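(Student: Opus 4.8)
The plan is to realise the projections $p_n$ degreewise on a one-dimensional model for $\underline E G$ and to identify the surviving degree with a space of harmonic cochains of positive von Neumann dimension. Since $G$ is a (finitely generated) non-amenable virtually free group, Lemma~\ref{lemma: f.g. virtually free} supplies a tree $X$ on which $G$ acts properly and cocompactly, and this $X$ is a one-dimensional model for $\underline E G$. Applying $\mathrm{Hom}_{\mathbb C G}(\,\cdot\,,\ell^2(G))$ to its augmented simplicial chain complex and invoking Lemma~\ref{step1} yields a cochain complex concentrated in degrees $0$ and $1$,
\[
0 \longrightarrow C^0 \stackrel{d_0}{\longrightarrow} C^1 \longrightarrow 0, \qquad C^0 \cong \bigoplus_{v}\ell^2(G/H_v), \quad C^1 \cong \bigoplus_{e}\ell^2(G/H_e),
\]
with $v,e$ running over vertex and edge orbits and $H_v,H_e$ the (finite) stabilisers. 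This complex computes $\mathrm H^\ast(G,\ell^2(G))$, so by the Hodge--de Rham picture the projection $p_i$ is the orthogonal projection onto $\ker\Delta_i\cong\overline{\mathrm H}^i(G,\ell^2(G))$, where $\Delta_i=d_i^\ast d_i+d_{i-1}d_{i-1}^\ast$ as in Lemma~\ref{lemma: kernel of Delta}.

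I would first clear away the vanishing statements, which are the easy part. For $n\geq 2$ the complex has $C^n=0$, so $\Delta_n$ acts on the zero space and $p_n=0$. In degree $0$ one has $\ker\Delta_0=\ker d_0\cong\overline{\mathrm H}^0(G,\ell^2(G))=\ell^2(G)^G$, which is trivial since $G$ is infinite; hence $p_0=0$. Only degree $1$ survives.

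The core of the argument, and the step I expect to be the main obstacle, is degree $1$. Here $d_1=0$, so $\Delta_1=d_0d_0^\ast$ and $\ker\Delta_1=\ker d_0^\ast=(\overline{\mathrm{im}\,d_0})^\perp\cong\overline{\mathrm H}^1(G,\ell^2(G))$. To place $p_1$ in $\mathrm M_{k_1}(\Csr(G))$ I need a spectral gap for $\Delta_1$; by Proposition~\ref{lem: spectral gap tool} this is equivalent to $\mathrm H^1(G,\ell^2(G))$ and $\mathrm H^2(G,\ell^2(G))$ being reduced. The latter is $0$, hence trivially reduced, so everything reduces to showing $\mathrm{im}\,d_0$ is closed, i.e. that $\Delta_0=d_0^\ast d_0$ is bounded below. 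Since $\ker\Delta_0=0$ and $\Delta_0$ is the combinatorial Laplacian of the cocompact $G$-tree, its spectral gap at $0$ is precisely the non-amenability of $G$ (Kesten's amenability criterion, equivalently positivity of the Cheeger constant). As $d_0^\ast d_0$ and $d_0d_0^\ast$ share their non-zero spectrum, $\Delta_1$ then has a spectral gap, and $p_1=\lim_{t\to\infty}e^{-t\Delta_1}$ converges in norm to a projection in $\mathrm M_{k_1}(\Csr(G))$ whose class lies in $\mathrm K_0(\Csr(G))$.

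Finally, for non-vanishing I would compute the von Neumann dimension of the range of $p_1$. Reducedness gives $\ker\Delta_1\cong\mathrm H^1(G,\ell^2(G))$, so $\dim_{\mathcal N(G)}\ker\Delta_1=\beta^{(2)}_1(G)$. Combining the Euler--Poincaré formula with the vanishing $\beta^{(2)}_0(G)=0$ (infinite $G$) and $\beta^{(2)}_i(G)=0$ for $i\geq 2$ (one-dimensionality) gives $\beta^{(2)}_1(G)=-\chi^{(2)}(G)=-\chi(G)$. A non-amenable virtually free group has a finite-index free subgroup $\mathbb F_k$ with $k\geq 2$, so $\chi(G)=(1-k)/[G:\mathbb F_k]<0$ and therefore $\beta^{(2)}_1(G)>0$. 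Thus $\ker\Delta_1\neq 0$ and $p_1\neq 0$, which finishes the plan.
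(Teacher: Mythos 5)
Your argument is correct and rests on the same two pillars as the paper's proof: the Bader--Nowak criterion (Proposition~\ref{lem: spectral gap tool}) to obtain the spectral gap for $\Delta_1$ from reducedness of $\mathrm H^1$ and $\mathrm H^2$ with $\ell^2(G)$ coefficients, and the pairing of $[p_1]$ with the von Neumann trace, $\tau([p_1])=\beta^{(2)}_1(G)=-\chi(G)>0$, for non-vanishing. The differences are in the bookkeeping. You work throughout on the one-dimensional tree model, so $\mathrm H^2(G,\ell^2(G))=0$ and the vanishing of $p_n$ for $n\geq 2$ are immediate from the absence of cochains in those degrees, whereas the paper obtains reducedness (indeed vanishing) of $\mathrm H^2$ by Shapiro's lemma applied to a finite-index free subgroup and kills the higher $p_n$ by faithfulness of $\tau$ together with $\beta^{(2)}_n(G)=0$; the latter has the advantage of being independent of the chosen resolution. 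For reducedness of $\mathrm H^1$ you unwind the statement to a lower bound for $\Delta_0=d_0^*d_0$ on the cocompact tree and invoke Kesten/Cheeger, which is a legitimate and more self-contained substitute for the paper's appeal to Guichardet's theorem that $\mathrm H^1(G,\ell^2(G))$ is reduced exactly when the infinite group $G$ is non-amenable. One small point worth recording explicitly: both arguments use that $G$ is finitely generated (so that the Bass--Serre tree is cocompact and $\beta^{(2)}_1(G)$ is finite), as you acknowledge parenthetically.
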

	
	\begin{proof}
		Assume that some free group $\mathbb F_k\ (k>1)$ is a finite index subgroup of $G$. To establish the presence of a spectral gap for $\Delta_1$, it suffices to show the $\ell^2$-cohomology in degree 1 and 2 is reduced \cite[Proposition 16]{Bader-Nowak}. The non-amenability of $G$ implies that $\mathrm H^1(G, \ell^2 (G))$ is reduced, and  
		Shapiro lemma implies that $\mathrm H^2(G, \ell^2(G))$ is reduced (even vanishes) as it is so for $\mathbb F_k$. Consequently, $p_1$ belongs to matrices over $\Csr(G)$. 
		Recall that
		$\tau([p_1]) = \beta^{(2)}_{1}(G)$. Since $\beta^{(2)}_{1}(G)$ is non-zero, it follows that $[p_1]$ is non-zero. Additionally, given the faithfulness of $\tau$, we note that $\beta^{(2)}_{n}(G)$ vanishes for $n> 1$, leading to the vanishing of $p_n$.
	\end{proof}

Lemma \ref{lemma: f.g. virtually free} and Lemma \ref{lem:existence} together with Theorem~\ref{main thm1} give rise to the following corollary.

\begin{corollary}
\label{cor: tree}
    Let $G$ be a non-amenable finitely generated virtually free group, and
    let $X$ be its Bass-Serre tree. Then the $\mathrm{K}$-theory class $[p_1]$ can be described in terms of the projections associated to the vertex and edge groups of the fundamental domain of the tree $X$
    \begin{equation*}
        -[p_1]=\sum_{v\in \, \mathrm{vert}\ G \backslash X}{[\rho_{v}]}\,\,\,\,\,-\sum_{e\in \, \mathrm{edge}\  G \backslash X}{[\rho_{e}]}\in \mathrm{K_0(C^*_{red}}(G)).
    \end{equation*}
\end{corollary}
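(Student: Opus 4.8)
The plan is to specialise Theorem~\ref{main thm1} to the situation described in Lemma~\ref{lemma: f.g. virtually free}, where the group $G$ acts properly and cocompactly on a tree. First I would observe that a tree is $1$-dimensional and contractible, so it serves as a $G$-finite model for $\underline{E}G$ of dimension $n+1 = 1$, i.e.\ with $n = 1$. By Lemma~\ref{lem:existence}, the only non-vanishing higher Kazhdan projection is $p_1$, so the hypotheses of Theorem~\ref{main thm1} are met with $n=1$. This is the conceptual heart of the argument: recognising that the Bass--Serre tree is exactly the simplicial complex the main theorem requires, and that non-amenability guarantees $p_1$ exists while all other $p_n$ vanish.

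Next I would unwind the indexing of simplices. For a tree, the set $SX$ of simplices splits as the $0$-simplices (vertices) and the $1$-simplices (edges), so $|\sigma| = 0$ for vertices and $|\sigma| = 1$ for edges. The sum over $G\backslash SX$ in \eqref{p1} therefore decomposes into a sum over vertex-orbits $v \in \operatorname{vert} G\backslash X$ and a sum over edge-orbits $e \in \operatorname{edge} G\backslash X$, with signs $(-1)^0 = +1$ and $(-1)^1 = -1$ respectively. Since the isotropy group of a vertex is the vertex group and that of an edge is the edge group, the averaging projections $\rho_\sigma$ become precisely the $\rho_v$ and $\rho_e$ associated to the vertex and edge groups of the fundamental domain.

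Substituting $n=1$ into \eqref{p1} then gives
\begin{equation*}
    (-1)^1[p_1] = \sum_{\sigma\in G\backslash SX}(-1)^{|\sigma|}[\rho_\sigma] = \sum_{v\in\,\mathrm{vert}\ G\backslash X}[\rho_v] - \sum_{e\in\,\mathrm{edge}\ G\backslash X}[\rho_e],
\end{equation*}
which is exactly the claimed identity after noting $(-1)^1[p_1] = -[p_1]$. The verification that the fundamental domain $G\backslash X$ has finitely many vertices and edges follows from $G$-compactness (equivalence (i)$\Leftrightarrow$(ii) of Lemma~\ref{lemma: f.g. virtually free}), ensuring both sums are finite.

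I do not anticipate a serious obstacle here, since the statement is essentially a direct application of the main theorem once the tree is identified as the relevant model; the only points requiring care are purely bookkeeping: confirming that the dimension bound $n+1=1$ is consistent with $p_1$ being the unique non-zero projection, and matching the abstract isotropy-group notation $\rho_\sigma$ to the concrete vertex- and edge-group projections $\rho_v,\rho_e$. The non-amenability hypothesis should be invoked explicitly, as it is what Lemma~\ref{lem:existence} needs to produce the spectral gap for $\Delta_1$ and hence a genuine $K$-theory class $[p_1]\in\mathrm{K}_0(\Csr(G))$.
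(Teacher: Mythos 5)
Your proposal is correct and matches the paper's (essentially unwritten) proof exactly: the corollary is presented there as an immediate consequence of Lemma~\ref{lemma: f.g. virtually free}, Lemma~\ref{lem:existence} and Theorem~\ref{main thm1}, specialised to the Bass--Serre tree, with the sum over $G\backslash SX$ split into vertex orbits (sign $+1$) and edge orbits (sign $-1$) just as you describe. One small notational slip worth fixing: the tree has dimension $1$, so in the convention of Proposition~\ref{proj resolution} the model has dimension $n+1$ with $n=0$, whereas the $n$ appearing in Theorem~\ref{main thm1} is the degree of the unique non-vanishing Kazhdan projection, namely $1$ --- these two uses of $n$ are independent, and your ``$n+1=1$, i.e.\ $n=1$'' conflates them, although the final identity $(-1)^1[p_1]=\sum_v[\rho_v]-\sum_e[\rho_e]$ is unaffected.
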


\begin{remark}
    By Lemma \ref{lemma: f.g. virtually free} a finitely generated virtually free group $G$ can be represented as an HNN extension. Taking this perspective, the K-class of $[p_1]$ paired with the canonical trace yields the generalised Schreier rank formula (see Theorem 2 in \cite{KarrasPietrowskiSolitar1973}). We explain it here.
    Let $\mathrm{vert}\ K$ and $\mathrm{edge}\ K$ denote the set of vertices and edges of the tree $K$, respectively.
    For each $v\in \mathrm{vert}\ K$ and $e\in \mathrm{edge}\ K$, let $K_v$ and $K_e$ be the associated vertex and edge group, respectively. Let $|A_i|=a_i$, $|K_{e_i}| = e_i$ and $|K_{v_i}|=v_i$, with $s+1$ being the number of vertices in $K$. 
    Then we have
    \begin{equation}
    \label{equa: p1 for HNN}
       [p_1]=-\sum_{v\in \mathrm{vert}\ G \backslash X}[\rho_{K_v}]+\sum_{e\in \mathrm{edge}\ G \backslash X}[\rho_{K_e}]+\sum_{i=1}^{l}[\rho_{A_i}].
    \end{equation}
    Indeed, by Lemma \ref{lemma: f.g. virtually free}, the isotropy groups of vertices of the Bass-Serre tree $X$ are the vertex groups $K_v$, while the isotropy groups of edges of $X$ consist of the edge groups $K_e$ together with the subgroups $\{A_i\}_{i\in \{1,\cdots,l\}}$.
    Therefore, the equality (\ref{equa: p1 for HNN}) follows immediately from Corollary \ref{cor: tree}.           
    Applying the canonical trace $\tau$ to both sides of (\ref{equa: p1 for HNN}), we obtain the first $\ell^2$-Betti number of $G$
    $$
    \beta^{(2)}_1(G)=-\frac{1}{v_1}-\cdots-\frac{1}{v_{s+1}}+\frac{1}{e_1}+\cdots+\frac{1}{e_s}+\frac{1}{a_1}+\cdots+\frac{1}{a_l}.
    $$
    The Euler characteristic of $G$ is $\chi(G)=(1-r)/j$, which is independent of the choice of free subgroup $H$
    \cite{Wall}. By Euler-Poincar\'e formula, we have
    $$
    \chi(G)=(1-r)/j=\sum_{i\geq0}(-1)^i\beta^{(2)}_i(G)=-\beta^{(2)}_1(G),
    $$
    implying
    $$  (1-r)/j = -\beta^{(2)}_1(G),$$
    which recovers the generalised Schreier rank formula (see \cite{KarrasPietrowskiSolitar1973})
    \begin{equation}
    \label{equa: KPS}
        r=j(-\frac{1}{v_1}-\cdots-\frac{1}{v_{s+1}}+\frac{1}{e_1}+\cdots+\frac{1}{e_s}+\frac{1}{a_1}+\cdots+\frac{1}{a_l})+1.
    \end{equation}
    \end{remark}

Virtually free groups can be constructed iteratively via amalgamated free products and HNN extensions over finite groups. We next spell out a description for the K-theory class $[p_1]$ associated to the building block of virtually free groups. 

\begin{corollary} \label{cor: amalgam}
    Let $G=F\ast_K L$ be an amalgamated free product of finite groups $F$ and $L$ over a common proper subgroup $K$ (where $K\subsetneqq F$ and $K\subsetneqq L$), with indices $[F:K]\geq 3$ or $[L:K]\geq 3$.
    Then the $K$-theory class $[p_1] \in \mathrm K_0(\Csr(G))$ is
    $$
    [p_1]=
    -[\rho_F]-[\rho_L]+[\rho_K],
    $$
    where $\rho_F$, $\rho_L$, and $\rho_K$ are the averaging projections associated to $H, L$ and $K$, respectively.
\end{corollary}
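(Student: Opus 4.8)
The plan is to realise $G=F\ast_K L$ as a non-amenable finitely generated virtually free group and then read the statement off directly from Corollary~\ref{cor: tree}, whose right-hand side is already phrased in terms of the vertex and edge stabilisers of the Bass-Serre tree. So the real content is just to verify the hypotheses of that corollary and to identify the fundamental domain of the splitting.

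First I would check the hypotheses of Corollary~\ref{cor: tree}. Since $F$ and $L$ are finite, $G$ is finitely generated, and as an amalgam of finite groups it is virtually free by Lemma~\ref{lemma: f.g. virtually free}. The only point requiring care is non-amenability. Because $K$ is a proper subgroup of both $F$ and $L$ we have $[F:K]\geq 2$ and $[L:K]\geq 2$; the hypothesis that at least one of these indices is $\geq 3$ rules out the degenerate configuration $[F:K]=[L:K]=2$, in which the Bass-Serre tree is a line and $G$ is virtually cyclic (of infinite dihedral type), hence amenable. In every remaining case $G$ contains a non-abelian free subgroup of finite index and is therefore non-amenable, so Corollary~\ref{cor: tree} is available.

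Next I would identify the Bass-Serre tree $X$ of the splitting and its fundamental domain. By Bass-Serre theory, $X$ has vertex set $G/F\sqcup G/L$ and edge set $G/K$, with the edge $gK$ joining $gF$ to $gL$. Consequently the quotient graph $G\backslash X$ is a single edge with two distinct endpoints: there are exactly two vertex orbits, represented by $eF$ and $eL$ with stabilisers $F$ and $L$, and exactly one edge orbit, represented by $eK$ with stabiliser $K$.

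Finally I would substitute this combinatorial data into the formula of Corollary~\ref{cor: tree}. Summing over the two vertex orbits and the single edge orbit gives
\begin{equation*}
    -[p_1]=\big([\rho_F]+[\rho_L]\big)-[\rho_K]\in\mathrm{K}_0(\Csr(G)),
\end{equation*}
and rearranging yields $[p_1]=-[\rho_F]-[\rho_L]+[\rho_K]$, as claimed. I expect the only genuine subtlety to be the non-amenability check, since it is precisely the borderline index configuration that must be excluded for a nonzero $p_1$ (and hence for Corollary~\ref{cor: tree}) to make sense; once that is settled, the determination of the stabilisers is routine Bass-Serre theory.
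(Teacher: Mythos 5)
Your proposal is correct and follows the same overall route as the paper: verify that $G=F\ast_K L$ satisfies the hypotheses of Corollary~\ref{cor: tree}, observe that the fundamental domain of the Bass--Serre tree is a segment with vertex stabilisers $F$, $L$ and edge stabiliser $K$, and read off the formula. The only point where you diverge is the non-amenability check. You argue structurally: the excluded configuration $[F:K]=[L:K]=2$ is exactly the virtually cyclic (infinite dihedral type) case, and in all remaining cases $G$ contains a non-abelian free subgroup of finite index. The paper instead computes the Euler characteristic $\chi(G)=\tfrac{1}{|F|}+\tfrac{1}{|L|}-\tfrac{1}{|K|}$, notes that it is strictly negative under the index hypothesis, and invokes the Cheeger--Gromov vanishing of Euler characteristics for infinite amenable groups. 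Both arguments are valid; yours is the more elementary and self-contained group-theoretic one, while the paper's is consistent with the $\ell^2$-invariant viewpoint running through the article and is reused verbatim in the HNN case (Corollary~\ref{cor: HNN}). Either way, once non-amenability is established, Lemma~\ref{lem:existence} guarantees $p_1$ is the unique non-vanishing higher Kazhdan projection and the substitution into Corollary~\ref{cor: tree} is exactly as you wrote it.
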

\begin{proof}
 We first show that the properness of $K$ and the index condition imply the non-amenability of $G$.
 Indeed, the fundamental domain in this case is a segment consisting of an edge and two vertices with stabilizer groups given by $K, F$, and $L$, respectively.
    Hence the Euler characteristic of $G$ in this case is
    $$
    \chi(G)=\frac{1}{|F|}+\frac{1}{|L|}-\frac{1}{|K|},
    $$
    (see \cite{Serre} for example).
    It is strictly negative under the assumptions, while the Euler characteristic and all the $\ell^2$-Betti numbers vanish for any infinite amenable group (see \cite[Theorem 0.2]{Cheeger-Gromov}).
    Therefore, by Corollary \ref{cor: tree}, we obtain the $K$-theoretical formula in the conclusion.
\end{proof}

    
\begin{corollary}
\label{cor: HNN}
    Let $F$ be a finite group with two proper isomorphic subgroups $A,B\subsetneqq F$. 
    Let $G$ be the associated HNN extension. 
    Then the $K$-theory class $[p_1] \in \mathrm K_0(\Csr(G))$ is
    $$
    [p_1]=
    -[\rho_F]+[\rho_A]=-[\rho_F]+[\rho_B].
    $$
\end{corollary}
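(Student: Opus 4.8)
The plan is to recognise Corollary~\ref{cor: HNN} as a direct specialisation of Corollary~\ref{cor: tree}: the work reduces to reading off the Bass--Serre tree data of the HNN extension, checking that $G$ is non-amenable so that the corollary applies, and reconciling the two displayed expressions for $[p_1]$.

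First I would record the HNN presentation $G = \langle F, t \mid tat^{-1} = \phi(a),\ a\in A\rangle$, where $\phi\colon A \to B$ is the given isomorphism. The associated Bass--Serre tree $X$ carries a $G$-action whose quotient graph $G\backslash X$ is a single vertex carrying a single loop edge; the vertex stabiliser is (conjugate to) $F$ and the edge stabiliser is (conjugate to) $A$. Since $F$ is finite these stabilisers are finite, so $G$ is finitely generated virtually free and $X$ is the tree supplied by Lemma~\ref{lemma: f.g. virtually free}.

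Next I would establish non-amenability, following the Euler-characteristic argument of Corollary~\ref{cor: amalgam}. As the fundamental domain consists of one vertex with stabiliser $F$ and one edge with stabiliser $A$, the Euler characteristic is
$$
\chi(G) = \frac{1}{|F|} - \frac{1}{|A|},
$$
which is strictly negative because $A \subsetneq F$ forces $|A| \leq |F|/2$. The homomorphism $G \to \mathbb Z$ sending $t \mapsto 1$ and $F \mapsto 0$ exhibits $G$ as infinite, so by \cite[Theorem 0.2]{Cheeger-Gromov} (vanishing of all $\ell^2$-Betti numbers, hence of $\chi$, for infinite amenable groups) the group $G$ cannot be amenable. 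Corollary~\ref{cor: tree} then applies and, with the single-vertex/single-edge fundamental domain, yields
$$
-[p_1] = [\rho_F] - [\rho_A], \qquad \text{equivalently} \qquad [p_1] = -[\rho_F] + [\rho_A].
$$

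Finally I would identify $[\rho_A]$ with $[\rho_B]$. The relation $tat^{-1}=\phi(a)$ gives $tAt^{-1}=B$ with $|A|=|B|$, and since $t$ defines a unitary in $\Csr(G)$,
$$
t\,\rho_A\,t^{-1} = \frac{1}{|A|}\sum_{a\in A} tat^{-1} = \frac{1}{|B|}\sum_{b\in B} b = \rho_B,
$$
so $\rho_A$ and $\rho_B$ are unitarily equivalent and $[\rho_A]=[\rho_B]$ in $\mathrm K_0(\Csr(G))$; combined with the previous display this gives both asserted forms of $[p_1]$. I expect the only delicate points to be the careful bookkeeping of the single-vertex, single-edge fundamental domain (so that no extraneous terms enter the sum of Corollary~\ref{cor: tree}) and the conjugation identity above; the non-amenability step is essentially identical to the amalgamated-product case already treated in Corollary~\ref{cor: amalgam}.
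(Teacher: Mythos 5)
Your proposal is correct and follows essentially the same route as the paper: non-amenability is deduced from the nonvanishing Euler characteristic $\tfrac{1}{|F|}-\tfrac{1}{|A|}$ of the one-vertex, one-loop fundamental domain together with the Cheeger--Gromov vanishing theorem, and the formula for $[p_1]$ then drops out of Corollary~\ref{cor: tree}. The only material you add beyond the paper's proof is the explicit unitary-conjugation identity $t\rho_A t^{-1}=\rho_B$ justifying $[\rho_A]=[\rho_B]$ in $\mathrm K_0(\Csr(G))$, a detail the paper leaves implicit; this is a welcome clarification rather than a different argument.
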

\begin{proof}
    We first show that the properness of $A$ (and $B$) in $F$ implies the non-amenability of $G$.
    In this case, the fundamental domain is a loop consisting of an edge and a vertex, with stabilizer groups given by $A$ or $B$, and $F$, respectively.
    Hence the Euler characteristic of $G$ is
    $$
    \chi(G)=\frac{1}{|F|}-\frac{1}{|A|}\neq 0,
    $$
    (see \cite{Serre} for example),  while the Euler characteristic and all the $\ell^2$-Betti numbers vanish for any infinite amenable group (see \cite[Theorem 0.2]{Cheeger-Gromov}).
    Therefore, by Corollary \ref{cor: tree}, we obtain the $K$-theoretical formula in the conclusion.
\end{proof}

We end this section with the following concrete example, which verifies the result obtained by direct computation (see \cite{Ren}).

\begin{example} \label{SL2Z+p1}
    Let $G = \mathrm{SL}(2, \mathbb Z) = 
    \mathbb Z_4 *_ {\mathbb Z_2} \mathbb Z_6$, with $u,s,$ and $t$ being the generators of $\mathbb Z_2, \mathbb Z_4,$ and $\mathbb Z_6$, respectively.  
    Corollary \ref{cor: amalgam} then implies that
    \[ [p_1] = \left[\frac{1+u}{2}\right]- \left[\frac{1+s+s^2+s^3}{4}\right] - \left[\frac{1+t+t^2+t^3+ t^4 +t^5}{6}\right].
    \]
\end{example}

Virtually free groups have vanishing reduced $\ell^2$-cohomology in degrees greater than one, which implies that their delocalised $\ell^2$-Betti numbers also vanish in those degrees. In addition, infinite groups have vanishing zeroth $\ell^2$-Betti number, and hence their delocalised zero-th $\ell^2$-Betti numbers vanish as well. Therefore, any non-vanishing of delocalised $\ell^2$-Betti numbers for finitely generated virtually free groups must occur in degree one. Theorem~\ref{main thm1} allows us to establish such non-vanishing results.

\begin{corollary} \label{cor: nonzero del betti}
    Let $G$ be a non-amenable finitely generated virtually free group acting properly on a tree $X$, and let 
    $g\in F$ be an element in a finite subgroup $F \leq G $ that fixes vertices, but no edges of $X$.
    Then the first delocalised $\ell^2$-Betti numbers of $G$ are non-zero and they satisfy
    \[
       \beta^{(2)}_{1, \langle{g}\rangle} (G) \in 
        \mathcal {F}_G \subseteq \mathbb{Q}.
       \]
\end{corollary}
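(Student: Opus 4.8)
The plan is to combine the explicit $K$-theory formula for $[p_1]$ from Corollary~\ref{cor: tree} with the delocalised trace computation of the averaging projections $[\rho_\sigma]$. Concretely, I would start from the identity
\begin{equation*}
    -[p_1]=\sum_{v\in \, \mathrm{vert}\ G \backslash X}{[\rho_{v}]}\,\,\,\,\,-\sum_{e\in \, \mathrm{edge}\  G \backslash X}{[\rho_{e}]}\in \mathrm{K_0(C^*_{red}}(G)),
\end{equation*}
apply the delocalised trace $\tau_{\langle g\rangle}$ to both sides, and use its additivity and the K-theory pairing $\tau_{\langle g\rangle}\colon \mathrm K_0(\Csr(G))\to\mathbb C$ furnished by Puschnigg's smooth subalgebra (valid since finitely generated virtually free groups are hyperbolic). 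This reduces the problem to evaluating $\tau_{\langle g\rangle}([\rho_H])$ for the finite isotropy groups $H$ arising as vertex and edge stabilisers.

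The core computation is the value of $\tau_{\langle g\rangle}$ on a single averaging projection. First I would observe that for a finite subgroup $H\leq G$ one has $\rho_H=\frac{1}{|H|}\sum_{h\in H}h\in\mathbb C H\subseteq\ell^1(G)$, so by the defining formula $\tau_{\langle g\rangle}(f)=\sum_{h\in\langle g\rangle}f(h)$ we get
\begin{equation*}
    \tau_{\langle g\rangle}([\rho_H])=\frac{1}{|H|}\,\bigl|\{h\in H : h\in\langle g\rangle\}\bigr|=\frac{|\langle g\rangle\cap H|}{|H|}.
\end{equation*}
Thus each term is a nonnegative rational number lying in $\mathcal F_G$ (it is an integer multiple of $\frac{1}{|H|}$), and summing with signs keeps the result in the additive group $\mathcal F_G$. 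This already yields the containment $\beta^{(2)}_{1,\langle g\rangle}(G)\in\mathcal F_G$.

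For the non-vanishing assertion I would use the hypothesis that $g$ lies in a finite subgroup $F$, fixing vertices but no edges of $X$. The point is that $g$ contributes to the vertex sum but not to the edge sum: since $g$ fixes no edge, $g$ is not conjugate into any edge stabiliser, so $\tau_{\langle g\rangle}([\rho_e])$ receives no contribution from $g$ itself, whereas $g$ does fix some vertex and so lies in at least one vertex stabiliser. I would make this precise by analysing, orbit by orbit, which stabilisers meet $\langle g\rangle$, showing the vertex contributions strictly dominate and leave a nonzero alternating sum. The main obstacle is this last step: one must rule out an accidental cancellation between the (nonzero) vertex contributions and the edge contributions, and control that $g$ fixing no edge genuinely forces every edge-stabiliser term indexed by $g$'s class to vanish. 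I expect this to require care with the relation between the conjugacy class $\langle g\rangle$ in $G$ and the $G$-action on $X$ (in particular that an element conjugate into an edge group would have to fix an edge), after which the strict sign separation gives $\beta^{(2)}_{1,\langle g\rangle}(G)\neq 0$.
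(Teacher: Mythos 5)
Your proposal is correct and follows essentially the same route as the paper: apply $\tau_{\langle g\rangle}$ to the identity from Corollary~\ref{cor: tree}, note that $\tau_{\langle g\rangle}([\rho_H])=\frac{|\langle g\rangle\cap H|}{|H|}\in\mathcal F_G$, and observe that since $g$ fixes no edge it is not conjugate into any edge stabiliser, so every edge term vanishes and only the same-signed, not-all-zero vertex terms survive. The ``accidental cancellation'' you flag as the main obstacle is resolved exactly by the observation you yourself supply (a conjugate of $g$ lying in $G_e$ would force $g$ to fix an edge), which is also how the paper handles it.
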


\begin{proof}
Let $g\in G$ be an element that fixes at least one vertex of the tree $X$, but does not fix any edge. Then $g$ is conjugate to an element of the stabilizer of a vertex $v\in SX$, where $SX$ denotes the set of simplices in the quotient tree $X \backslash G$.
By Corollary \ref{cor: tree}, we have
 $
        -[p_1]=\sum_{v\in \,\mathrm{vert}\ G \backslash X}{[\rho_{v}]}\,\,\,\,\,-\sum_{e\in \, \mathrm{edge}\  G \backslash X}{[\rho_{e}]},
 $  
where $\rho_ v$ and $\rho_e$ are averaging projections.
Applying the delocalised trace $\tau_{\langle{g}\rangle}$ to both sides yields
\begin{equation*}
    \beta^{(2)}_{1, \langle{g}\rangle} (G)
    = \tau_{<g>}([p_1]) 
    = - \sum_{v \in \,\mathrm{vert}\ G \backslash X} 
    \tau_{<g>}([\rho_ v]).
    \end{equation*}
Since $g$ does not appear in the isotropy group of any edge, the above sum is taken only over the vertices in $G\backslash SX$. Therefore, no cancellations can occur due to contributions of having different dimensions and thus different signs.
Furthermore, $\beta^{(2)}_{1, \langle{g}\rangle} (G)$ is non-zero since $g$ does fix some vertices, and it clearly belongs to the mentioned subgroup of rationals.
\end{proof}

Clearly when $g\in G$ does not fix any simplices, then the delocalised $\ell^2$-Betti number associated to that $g$ vanishes.  
Let us assume that $g\in G$ fixes some edge and not just vertices. In this case, vanishing and non-vanishing of delocalised $\ell^2$-Betti numbers can occur. Consider an HNN extension $G=F_{*_{\psi}}$ associated with a finite group $F$ with two isomorphic subgroups $A,B\subset F$ and an isomorphism $\psi:A\to B$. In this case, the quotient $G\backslash X$ of the Bass-Serre tree $X$ associated with $G$ is a loop. The isotropy group of the unique vertex and the edge are $F$ and $A$, respectively.
Consider an element $g\in A\subset G$ that fixes the edge.
On the one hand, let $A=B=F$ with $\psi$ being the identity map. Corollary \ref{cor: HNN} implies that the class $[p_1]$ for $G$ vanishes and so does $\beta^{(2)}_{1, \langle{g}\rangle} (G)$.
On the other hand, let $A=B$ be a proper subgroup of a finite abelian group $F$ with $\psi$ being the identity map, then $\beta^{(2)}_{1, \langle{g}\rangle} (G)=1/{|A|}-1/{|F|}$ is strictly positive by Corollary \ref{cor: HNN}. 

We close this article by calculating some examples of infinite groups with with non-vanishing delocalised $l^2$-Betti numbers. 
\begin{example}
Consider the group $G = \mathrm{SL}(2, \mathbb Z) = 
    \mathbb Z_4 *_ {\mathbb Z_2} \mathbb Z_6$, with $u,s,$ and $t$, generators for $\mathbb Z_2, \mathbb Z_4,$ and $\mathbb Z_6$, respectively. From Example \ref{SL2Z+p1} we have 
$$ [p_1] = \left[\frac{1+u}{2}\right]- \left[\frac{1+s+s^2+s^3}{4}\right] - \left[\frac{1+t+t^2+t^3+ t^4 +t^5}{6}\right].
$$
The delocalised $\ell^2$-Betti numbers for $G$ are then
		\begin{equation*}
			\beta^{(2)}_{1, \langle{g}\rangle}(G) = 
			\begin{cases}
				1/12 &\qquad  g=e\\
                1/12  &\qquad  g\in\langle{u}\rangle= \langle{s^2}\rangle = \langle{t^3}\rangle\\
				-1/4  &\qquad  g\in\langle{s}\rangle\\
				-1/6  &\qquad  g\in\langle{t}\rangle\\     
                -1/4  &\qquad  g\in\langle{s^3}\rangle\\   
				-1/6  &\qquad  g\in\langle{t^2}\rangle\\
                -1/6  &\qquad  g\in\langle{t^4}\rangle\\
                -1/6  &\qquad  g\in\langle{t^5}\rangle\\
				0    &\qquad \text{otherwise} 
			\end{cases}
		\end{equation*}
		and $\beta ^{(2)}_{k, \langle{g}\rangle}(G) = 0$ for $k \neq 1$ and $g\in G$.
\end{example}

\begin{example}
    Consider the Klein four group $V=\langle a,b\,|\,a^2=b^2=e,ab=ba \rangle$
     with its two isomorphic proper subgroups $A=\{e,a\}$ and $B=\{e,b\}$. 
    Let $G$ be
     the associated HNN extension.
    Then by Corollary~\ref{cor: HNN}, $[p_1]$ is 
    $$
    [p_1]=\left[\frac{e+a}{2}\right]-\left[\frac{e+a+b+ab}{4}\right].
    $$
    Hence, the delocalised $\ell^2$-Betti numbers are
    \begin{equation*}
        \beta^{(2)}_{1, \langle{g}\rangle}(G) = 
        \begin{cases}
            1/4 &\qquad g=e\\
            -1/4 &\qquad g=ab \\
            0 &\qquad \text{otherwise}
        \end{cases}
    \end{equation*}
    and $\beta ^{(2)}_{k, \langle{g}\rangle}(G) = 0$ for $k \neq 1$ and $g\in G $.
\end{example}

\begin{example}
    Consider the dihedral group $D_4=\langle r,s \,|\,r^4=s^2=e,srs=r^{-1}\rangle
    $
    with its two isomorphic proper subgroups $A=\{e,s \}$ and $B=\{e,sr \}$.
    Let $G
    $ be the associated HNN extension.
   Then by Corollary~\ref{cor: HNN}, the $K$-theory class $[p_1]$ 
   is
    $$
      [p_1]=\left[\frac{e+s}{2}\right]-\left[\frac{e+r+r^2+r^3+s+sr+sr^2+sr^{3}}{8}\right].
    $$
    Hence, the delocalised $\ell^2$-Betti numbers are
    \begin{equation*}
        \beta^{(2)}_{1, \langle{g}\rangle}(D_4^*)=
        \begin{cases}
            3/8 &\qquad g=e \\
            -1/4 &\qquad g\in \langle {r} \rangle=\langle {r^3} \rangle \\
            -1/8 &\qquad g\in \langle {r^2} \rangle \\
            1/4 & \qquad g\in \langle {s}\rangle \\
            -1/4 & \qquad g\in \langle {sr} \rangle \\
            0 &\qquad \text{otherwise}
        \end{cases}
    \end{equation*}
    and $\beta ^{(2)}_{k, \langle{g}\rangle}(G) = 0$ for $k \neq 1$ and $g\in G$.
\end{example}
	
	
	\bibliographystyle{alpha}
	\bibliography{main}

	\vspace{2em}
	\begin{minipage}[t]{0.45\linewidth}
		\small
		Sanaz Pooya \\
		Institute of Mathematics\\
		University of Potsdam\\
		14476 Potsdam, Germany\\
		{\footnotesize sanaz.pooya@uni-potsdam.de}
		\\   
        
        \small 
		Baiying Ren \\ Research Center of Operator Algebras \\
		East China Normal University\\
		Shanghai 200241, China \\
		{\footnotesize 52275500020@stu.ecnu.edu.cn}
		\\ 
        
		\small 
		Hang Wang \\ Research Center of Operator Algebras \\
		East China Normal University\\
		Shanghai 200241, China \\
		{\footnotesize wanghang@math.ecnu.edu.cn}
	\end{minipage}
\end{document}